\documentclass[11pt]{article}
\usepackage[T1]{fontenc}
\usepackage[utf8]{inputenc}
\usepackage{lmodern}
\usepackage[margin=1in]{geometry}
\usepackage{amsmath}
\usepackage{amssymb}
\usepackage{amsthm}
\usepackage{bm}
\usepackage{graphicx}
\usepackage{booktabs}
\usepackage{array}
\usepackage{longtable}
\usepackage{algorithm}
\usepackage{algpseudocode}
\usepackage{tikz}
\usetikzlibrary{arrows.meta,positioning}
\usepackage[round]{natbib}
\usepackage{hyperref}

\theoremstyle{plain}
\newtheorem{theorem}{Theorem}
\newtheorem{lemma}{Lemma}
\theoremstyle{remark}
\newtheorem{remark}{Remark}
\theoremstyle{definition}
\newtheorem{counterexample}{Counterexample}

\newcommand{\figcaption}[1]{\caption{#1}}

\title{Admission and Assortment Optimization for Multi-size Automated Parcel Lockers}

\author{%
  Carlos An\'{\i}bal Su\'arez\thanks{Facultad de Ciencias Naturales y Matem\'aticas, Escuela Superior Polit\'{e}cnica del Litoral, Guayaquil, Ecuador; \href{mailto:carasuar@espol.edu.ec}{carasuar@espol.edu.ec}} \and
  Antoine Deza\thanks{Department of Computing and Software, Faculty of Engineering, McMaster University, Hamilton, Ontario, Canada; \href{mailto:deza@mcmaster.ca}{deza@mcmaster.ca}} \and
  Tal Raviv\thanks{School of Industrial \& Intelligent Systems Engineering, Faculty of Engineering, Tel Aviv University, Israel; \href{mailto:talraviv@tau.ac.il}{talraviv@tau.ac.il}. Corresponding author.}%
}
\date{\today}

\begin{document}
\maketitle

\begin{abstract}
We study admission control and capacity design for automated parcel lockers with multiple parcel and locker sizes. A smaller parcel can use a larger locker, but doing so may block a future larger parcel whose rejection is more costly. We formulate the admission problem as a finite-state, infinite-horizon average-cost Markov decision process and solve small instances exactly by relative value iteration. We analyze the \emph{always-accept} (AA) policy, which admits every feasible parcel into the remaining compatible capacity, and give a sufficient condition for its optimality. Across two-, three-, and four-size experiments, AA is optimal in fast-pickup regimes and nearly optimal when holding times are longer; observed optimality gaps are negligible even when AA is not optimal. We then study the locker-assortment problem, which minimizes facility cost plus optimal expected rejection cost. We give an exact bound-and-enumerate algorithm for moderate-size instances. Although the objective is not discrete convex, exchange-neighborhood local search finds the certified optimum in every instance for which exact certification is computationally tractable, and it scales as a heuristic to larger systems.
\end{abstract}

\noindent\textbf{Keywords:} automated parcel lockers; admission control; multi-size lockers; Markov decision process; value iteration; locker assortment

\medskip

\section{Introduction}

Automated parcel lockers are a self-service pickup system used in last-mile delivery where a courier delivers parcels to a finite bank of lockers; customers receive a notification and access code, and each parcel occupies capacity until it is collected or returned after a pickup window. Empirical studies and industry implementations highlight both their operational value and the associated control challenge. Lockers consolidate deliveries and reduce failed home-delivery attempts \citep{IwanKijewskaLemke2016,RanjbariEtAl2023DeliveryTimes}, but capacity is scarce, customer pickup times are uncertain, and poor admission or reservation policies can reject profitable requests even when compatible capacity exists \citep{SethuramanEtAl2024}.
A Seattle residential deployment studied by \citet{RanjbariEtAl2023DeliveryTimes} used 55 lockers, split into 19 small, 28 medium, and 8 large lockers. In the Amazon Locker setting, \citet{SethuramanEtAl2024} consider 100-slot lockers as a planning example and report that capacity management increased locker throughput by 9\%.
This paper studies the corresponding admission-control question: when parcels of different sizes compete for lockers with different capacities, should the operator reserve larger lockers for future larger parcels, or admit every feasible parcel as it arrives?

The analytical framework is a finite- or infinite-horizon Markov decision process. Results in dynamic programming establish the existence of optimal stationary policies in finite-state average-cost models and provide the Bellman-equation foundation used in this paper \citep{Puterman1994}. The literature on monotone comparative-statics shows that convexity and supermodularity often imply threshold or monotone policies, both in static optimization and in dynamic control \citep{Topkis1998}. These ideas arise in stochastic inventory theory, where convex value functions and one-dimensional marginal comparisons lead to structured policies such as base-stock and reorder-point policies \citep{Porteus2002}. 

The decision of whether to place an additional small parcel in a large locker is related to dynamic admission and stochastic knapsack problems. In these models, requests arrive over time, consume scarce capacity, and must be accepted or rejected based on their current reward and their impact on future opportunities. \citet{PapastavrouRajagopalanKleywegt1996} and \citet{KleywegtPapastavrou1998,KleywegtPapastavrou2001} show that dynamic stochastic knapsack formulations often admit threshold acceptance policies and exhibit convexity and monotonicity properties. Our setting differs because capacity is partitioned into several locker sizes and because accepted small parcels may later migrate from large lockers to small ones. Admitting a small parcel into flexible capacity generates an immediate gain, but also an opportunity cost through the increased risk of rejecting future high-priority demand.
 
\citet{GurvichPerry2012} study overflow networks motivated by call-center outsourcing and show how blocking and overflow interactions can generate tractable approximations. Our problem differs in its discrete-capacity, parcel-storage context, but similar questions arise when using flexible capacity for one size changes the future loss exposure of another size. The large lockers act as both a dedicated capacity for large parcels and an overflow buffer for small parcels.

Research on parcel lockers has expanded rapidly, especially in response to the growth of e-commerce and alternatives to home delivery. Reviews by \citet{MaWongTeo2022} indicate a focus on network design, facility location, capacity sizing, layout, routing, sustainability, and consumer adoption. Empirical work by \citet{MaTeoWong2024} uses parcel-locker network data to analyze how spatial characteristics affect demand and operational performance. Network-level models study how service points should be embedded in last-mile systems. \citet{OrensteinRaviv2022} introduce a hyperconnected service network in which service points can serve as pickup, drop-off, and intermediate storage nodes, and develop routing and scheduling tools for such networks. Strategic design models, such as \citet{DeutschGolany2018}, \citet{raviv2023service}, and \citet{SabzevariZadehEtAl2026}, study where lockers should be located and how much capacity should be installed. 

\citet{raviv2023service} emphasizes the interaction between local congestion and network-level service-point location and capacity decisions in a single-size setting, while \citet{SabzevariZadehEtAl2026} integrate parcel-locker location, capacity, fleet, replenishment, and routing decisions in a multi-period urban delivery model. At the local design level, \citet{FaugereMontreuil2020} optimize smart locker-bank configurations and compare monolithic and modular designs, while \citet{Kahr2022} integrates locker location and layout decisions with commodity-specific locker sizes. In Kahr's computational study, parcels and sizes are represented by small, medium, large, and extra-large sizes. However, the capacity constraints are commodity-specific and do not model the substitutability among locker sizes that is central to the present paper. In Kahr's model, locker sizes are mutually exclusive resources; in ours, larger lockers can serve as overflow capacity for smaller parcels.

A subset of the locker literature addresses the micro-level operational questions most relevant to this paper. \citet{OrensteinRavivSadan2019} study flexible delivery to automated parcel lockers when recipients may accept several candidate service points, showing how flexibility of delivery can improve system performance. 
They consider parcels and lockers of different sizes, but their models are deterministic and myopic rather than stochastic and dynamic. 
In the Amazon Locker setting, \citet{SethuramanEtAl2024} develop a reservation system that protects locker space for higher-priority shipping options using demand forecasting and optimization.
Their work shows that first-come-first-served use of locker capacity can be substantially improved by explicitly reserving flexible capacity for more valuable future demand. 
Their setting is single-size with multiple shipping priorities; ours is multi-size with a single priority class. The two problems lead to different operational recommendations -- reservation across priorities is valuable in Sethuraman et al., whereas reservation across sizes is rarely beneficial in our tested regimes, as shown in Sections~\ref{sec:policy-analysis}--\ref{sec:numerical-evidence}.

\citet{Mancini2026} consider a dynamic stochastic parcel-locker assignment problem with uncertain pickup times, in which delivery orders must be accepted or rejected online and then assigned to lockers. However, they also assume parcels of uniform size.
 
Our contributions are a multi-size MDP formulation, a sufficient optimality condition for AA, numerical evidence that AA is optimal or economically indistinguishable from optimal in the tested regimes, an exact bound-and-enumerate method for locker assortment optimization, and a scalable local-search heuristic for large service points. 

The paper is organized as follows. Section~\ref{sec:problem-description} defines the multi-size locker-admission problem and provides an exact relative value iteration algorithm for the general \(n\)-size model. Section~\ref{sec:policy-analysis} analyzes the AA policy, Section~\ref{sec:numerical-evidence} reports numerical evidence on AA performance, and Section~\ref{sec:locker-assortment} studies the locker-assortment problem. The supplementary material reports pickup-time robustness checks, computational reproducibility details, the assortment demand grid, and a notation table.

\section{Markov Decision Process (MDP) for the Multi-size Locker-Admission Problem}\label{sec:problem-description}

\subsection{Multi-size Locker-Admission Problem}\label{sec:problem-definition}
Consider an automated parcel locker service point with $n$ locker sizes indexed by \(1,2,\dots,n\) in increasing order of size. Let $b_i$ be the number of lockers of size $i$. Parcels also have sizes \(1,\dots,n\): a size-$i$ parcel can fit in any locker of size $i,i+1,\dots,n$, but not in a smaller locker. Let \(X_t=(X_t^1,\dots,X_t^n)\) be the arrival vector in cycle $t$, and assume that \(\{X_t\}_{t\ge 1}\) is i.i.d.\ across cycles. The MDP formulation allows any finite-support or truncated arrival distribution. In the numerical and assortment experiments, arrivals are independent Poisson random variables with means \(\lambda_i\). Rejection penalties satisfy \(0<\pi_1<\pi_2<\cdots<\pi_n\).

Each replenishment cycle has two phases before customer pickups. First, the parcels already present in the service point are scanned in increasing order of size and moved, whenever possible, to the smallest feasible locker. This relocation step is deterministic and does not involve an admission-control decision; it abstracts from any courier time required to move parcels within the bank. Second, new arrivals are considered in decreasing order of size, from $n$ down to $1$. At this stage, the decision maker chooses how many arriving parcels of each size to admit, subject to the multi-size compatibility constraints and the lockers that remain available after larger-size decisions have been made. An admitted size-$i$ parcel can be placed in any available locker of size $i,i+1,\dots,n$, but the model does not require all feasible spillover to be used. Any arrival that is not admitted is rejected and incurs its size-specific penalty.

After replenishment, pickups occur during the day. We assume a common memoryless pickup process: every parcel currently in the system departs independently with probability $p$, regardless of its size, seniority, or locker type. Supplementary material tests the sensitivity of this simplifying assumption. See Table~\ref{tab:core-notation} for the notation used in the model.

\begin{table}[htbp]
\centering
\caption{Notation for the multi-size admission model}
\label{tab:core-notation}
\small
\begin{tabular}{@{}ll@{}}
\toprule
Symbol & Meaning \\
\midrule
\(n\) & Number of parcel and locker sizes, ordered from smallest to largest \\
\(b_i\), \(B_i=\sum_{k=i}^n b_k\), \(C_i=\sum_{k=1}^i b_k\) & Size-\(i\) capacity, tail capacity, and prefix capacity \\
\(X_t\), \(A\), \(a\) & Arrival vector in cycle \(t\), generic arrival vector, and realized arrival vector \\
\(\lambda_i\) & Mean size-\(i\) Poisson arrival rate in the numerical experiments \\
\(x\in\mathcal S_n\) & Before-replenishment state vector \\
\(y\in\mathcal U(x,a)\) & Feasible post-replenishment parcel-count vector after arrivals \(a\) \\
\(D\) & Pickup vector, with \(D_i\mid y_i\sim \mathrm{Bin}(y_i,p)\) independently \\
\(\pi_i\) & Rejection penalty for a size-\(i\) parcel \\
\(g\), \(h\) & Average cost and relative value function in the Bellman equation \\
\bottomrule
\end{tabular}
\normalsize
\end{table}

\subsection{State Space and Feasible Decisions}\label{sec:nsize-model}
Given a relocation policy, the exact placement pattern of the parcels before replenishment is not needed in the state. It is enough to record \(x=(x_1,\dots,x_n)\), where $x_i$ is the number of size-$i$ parcels currently in the system. Define the tail capacities \(B_i=\sum_{k=i}^n b_k\), \(i=1,\dots,n\), and the prefix capacities \(C_i=\sum_{k=1}^i b_k\), with \(C_0=0\).
The feasible state space is
\[
\mathcal S_n=
\left\{
x\in\mathbb Z_+^n:
\sum_{k=i}^n x_k \le B_i,\quad i=1,\dots,n
\right\},
\]
because parcels of sizes $i,i+1,\dots,n$ cannot use lockers smaller than size $i$.

Let \(A=(A_1,\dots,A_n)\) denote a generic copy of the arrival vector and write \(a=(a_1,\dots,a_n)\) for a realization. Given state \(x\) and arrivals \(a\), let \(y=(y_1,\dots,y_n)\) denote the post-replenishment parcel-count vector. The physically feasible
post-decision set is
\[
\mathcal Y(x,a)=
\left\{
y\in\mathbb Z_+^n:
x_i \le y_i \le x_i+a_i,\quad
\sum_{k=i}^n y_k \le B_i,\quad i=1,\dots,n
\right\}.
\]
The MDP action is not every vector in \(\mathcal Y(x,a)\), because arrivals are
processed in decreasing size order, and a parcel that fits into an available
locker of its own size is not rejected. To formalize this, define recursively
\(s_0(x)=0\) and
\[
s_i(x)=\bigl(s_{i-1}(x)+x_i-b_i\bigr)^+,\qquad i=1,\dots,n,
\]
where \(s_i(x)\) is the number of parcels of sizes \(1,\dots,i\) that must
occupy lockers larger than size \(i\) after the relocation step. Thus
\(\ell_i(x)=\min\{b_i,s_{i-1}(x)\}\) is the number of size-\(<i\) parcels
occupying size-\(i\) lockers after relocation, and
\(\phi_i(x)=(b_i-\ell_i(x)-x_i)^+\) is the number of size-\(i\) lockers still
available after relocating existing size-\(<i\) and size-\(i\) parcels. The admissible action space is
\[
\mathcal U(x,a)=
\left\{
y\in\mathcal Y(x,a):
y_i\ge x_i+\min\{a_i,\phi_i(x)\},\quad i=1,\dots,n
\right\}.
\]
Equivalently, the action can be represented by an admission vector
\(u=y-x\), where \(0\le u_i\le a_i\) is the number of admitted size-\(i\) arrivals and at least \(\min\{a_i,\phi_i(x)\}\) of them must be admitted to
size-\(i\) lockers.  Representing the action by \(y\) makes the transition law immediate.
A stationary admission policy is a mapping
\(\mu:\mathcal S_n\times\mathbb Z_+^n\to\mathcal S_n\) satisfying
\(\mu(x,a)\in\mathcal U(x,a)\) for every feasible state \(x\) and arrival
realization \(a\).
The one-period rejection cost is \(c(x,a,y)=\sum_{i=1}^n \pi_i(x_i+a_i-y_i)\). Conditional on \(y\), let \(D=(D_1,\dots,D_n)\) be the pickup vector, with independent components \(D_i\mid y_i \sim \mathrm{Bin}(y_i,p)\). The next state is \(x'=y-D\); Figure~\ref{fig:mdp-small-example} illustrates this transition support for \(b=(2,1)\).

\begin{figure}[htbp]
\centering
\resizebox{\textwidth}{!}{
\begin{tikzpicture}[
    >=Latex,
    state/.style={circle,draw,fill=white,minimum size=8mm,inner sep=0pt,font=\tiny},
    poststate/.style={circle,draw,fill=black!7,minimum size=8mm,inner sep=0pt,font=\tiny},
    decision/.style={rectangle,draw,rounded corners=1.5pt,fill=black!10,align=center,minimum width=5.5mm,minimum height=3.5mm,inner sep=0.5pt,font=\scriptsize},
    stateedge/.style={-{Stealth[length=1.1mm,width=0.8mm]},draw=black!45,line width=0.25pt},
    decisionedge/.style={-{Stealth[length=1.1mm,width=0.8mm]},draw=black!35,line width=0.25pt},
    pickupedge/.style={-{Stealth[length=1.2mm,width=0.85mm]},draw=black!70,line width=0.35pt},
    note/.style={align=center,font=\scriptsize},
]

\node[font=\small\bfseries] at (0,3.75) {Before replenishment};
\node[font=\small\bfseries] at (3.2,3.75) {Decision};
\node[font=\small\bfseries] at (6.4,3.75) {After replenishment};
\node[font=\small\bfseries] at (10.6,3.75) {Before replenishment};
\node[note] at (5.3,4.35)
{$\mathcal S_2=\{(x_1,x_2):x_2\le 1,\ x_1+x_2\le 3\}$};

\node[state] (l00) at (0,3) {$(\mathcal B,0,0)$};
\node[state] (l10) at (0,2) {$(\mathcal B,1,0)$};
\node[state] (l20) at (0,1) {$(\mathcal B,2,0)$};
\node[state] (l30) at (0,0) {$(\mathcal B,3,0)$};
\node[state] (l01) at (0,-1) {$(\mathcal B,0,1)$};
\node[state] (l11) at (0,-2) {$(\mathcal B,1,1)$};
\node[state] (l21) at (0,-3) {$(\mathcal B,2,1)$};

\node[decision] (d00a) at (3.10,3.10) {};
\node[decision] (d00b) at (3.15,3.05) {};
\node[decision] (d00) at (3.20,3.00) {};
\node[decision] (d10a) at (3.10,2.10) {};
\node[decision] (d10b) at (3.15,2.05) {};
\node[decision] (d10) at (3.20,2.00) {};
\node[decision] (d20a) at (3.10,1.10) {};
\node[decision] (d20b) at (3.15,1.05) {};
\node[decision] (d20) at (3.20,1.00) {};
\node[decision] (d01a) at (3.10,-0.90) {};
\node[decision] (d01b) at (3.15,-0.95) {};
\node[decision] (d01) at (3.20,-1.00) {};
\node[decision] (d11a) at (3.10,-1.90) {};
\node[decision] (d11b) at (3.15,-1.95) {};
\node[decision] (d11) at (3.20,-2.00) {};

\node[poststate] (m00) at (6.4,3) {$(\mathcal A,0,0)$};
\node[poststate] (m10) at (6.4,2) {$(\mathcal A,1,0)$};
\node[poststate] (m20) at (6.4,1) {$(\mathcal A,2,0)$};
\node[poststate] (m30) at (6.4,0) {$(\mathcal A,3,0)$};
\node[poststate] (m01) at (6.4,-1) {$(\mathcal A,0,1)$};
\node[poststate] (m11) at (6.4,-2) {$(\mathcal A,1,1)$};
\node[poststate] (m21) at (6.4,-3) {$(\mathcal A,2,1)$};

\node[state] (r00) at (10.6,3) {$(\mathcal B,0,0)$};
\node[state] (r10) at (10.6,2) {$(\mathcal B,1,0)$};
\node[state] (r20) at (10.6,1) {$(\mathcal B,2,0)$};
\node[state] (r30) at (10.6,0) {$(\mathcal B,3,0)$};
\node[state] (r01) at (10.6,-1) {$(\mathcal B,0,1)$};
\node[state] (r11) at (10.6,-2) {$(\mathcal B,1,1)$};
\node[state] (r21) at (10.6,-3) {$(\mathcal B,2,1)$};
\node[font=\scriptsize] at (11.35,3) {$\cdots$};
\node[font=\scriptsize] at (11.35,2) {$\cdots$};
\node[font=\scriptsize] at (11.35,1) {$\cdots$};
\node[font=\scriptsize] at (11.35,0) {$\cdots$};
\node[font=\scriptsize] at (11.35,-1) {$\cdots$};
\node[font=\scriptsize] at (11.35,-2) {$\cdots$};
\node[font=\scriptsize] at (11.35,-3) {$\cdots$};

\foreach \s/\d in {l00/d00,l10/d10,l20/d20,l01/d01,l11/d11}{
    \draw[stateedge] (\s.east) -- (\d.west);
}
\draw[stateedge] (l30.east) -- (m30.west);
\draw[stateedge] (l21.east) -- (m21.west);

\foreach \m in {m00,m10,m20,m30,m01,m11,m21}{
    \draw[decisionedge] (d00.east) -- (\m.west);
}
\foreach \m in {m10,m20,m30,m11,m21}{
    \draw[decisionedge] (d10.east) -- (\m.west);
}
\foreach \m in {m20,m30,m21}{
    \draw[decisionedge] (d20.east) -- (\m.west);
}
\foreach \m in {m01,m11,m21}{
    \draw[decisionedge] (d01.east) -- (\m.west);
}
\foreach \m in {m11,m21}{
    \draw[decisionedge] (d11.east) -- (\m.west);
}

\draw[pickupedge] (m00.east) -- (r00.west);
\foreach \r in {r00,r10}{
    \draw[pickupedge] (m10.east) -- (\r.west);
}
\foreach \r in {r00,r10,r20}{
    \draw[pickupedge] (m20.east) -- (\r.west);
}
\foreach \r in {r00,r10,r20,r30}{
    \draw[pickupedge] (m30.east) -- (\r.west);
}
\foreach \r in {r00,r01}{
    \draw[pickupedge] (m01.east) -- (\r.west);
}
\foreach \r in {r00,r10,r01,r11}{
    \draw[pickupedge] (m11.east) -- (\r.west);
}
\foreach \r in {r00,r10,r20,r01,r11,r21}{
    \draw[pickupedge] (m21.east) -- (\r.west);
}

\node[note] at (3.2,-3.85) {observe arrivals\\choose $y\in\mathcal U(x,a)$};
\node[note] at (8.5,-3.85) {independent pickups};

\end{tikzpicture}
}
\figcaption{MDP transition support for \(b=(2,1)\) where \(\mathcal B\) and \(\mathcal A\) denote before- and after-replenishment copies; stacked rectangles denote arrival-contingent decisions; pickup arcs map post-decision states to feasible next before states. \label{fig:mdp-small-example}}
\end{figure}

The infinite-horizon average-cost MDP is described by the Bellman equation
\begin{equation}\label{eq:nsize-bellman}
h(x)+g
=
\mathbb E_A\left[
\min_{y\in\mathcal U(x,A)}
\left\{
\sum_{i=1}^n \pi_i(x_i+A_i-y_i)
+\mathbb E\bigl[h(y-D)\mid y\bigr]
\right\}
\right],
\qquad x\in\mathcal S_n.
\end{equation}
Here \(g\) is the optimal long-run average rejection cost per cycle and \(h\) is the relative value function. 
Under the assumptions $0<p<1$ and $\Pr(X_t=0)>0$, every state can reach the empty state with positive probability under any stationary policy, so the induced Markov chain is unichain.

The number of feasible states is
\[
|\mathcal S_n|
=
\sum_{x_n=0}^{b_n}
\sum_{x_{n-1}=0}^{b_{n-1}+b_n-x_n}
\cdots
\sum_{x_2=0}^{B_2-\sum_{k=3}^n x_k}
\left(B_1-\sum_{k=2}^n x_k+1\right).
\]
Thus the exact MDP is valid for any number of sizes, but its state space grows quickly with both the number of sizes and the locker counts. This is why exact value iteration is used mainly as a validation tool for small instances.

Consider the two-size case; that is, $n=2$. Then \(\mathcal S_2=\{(x_1,x_2)\in\mathbb Z_+^2: x_2\le b_2,\ x_1+x_2\le b_1+b_2\}\), and the final admission decision can be summarized by the target number of size-1 parcels retained after size-2 parcels have been processed. This makes the two-size case useful for broad exact sweeps and for illustrating the opportunity-cost logic. 

\section{Relative Value Iteration and the AA Policy}\label{sec:policy-analysis}

\subsection{Relative Value Iteration}\label{sec:vi}
The exact computations in this paper use relative value iteration (Algorithm~\ref{alg:nsize-vi}) for the finite-state average-cost MDP in \eqref{eq:nsize-bellman}. The implementation maintains an approximation $v^{(k)}$ of the bias function, applies the Bellman operator, and normalizes each iteration by subtracting the value of a reference state, taken to be the empty state $0=(0,\dots,0)$. The resulting scalar \(g^{(k)}=(Tv^{(k)})(0)\) is the current estimate of the optimal average daily rejection cost.

The implementation precomputes binomial pickup probabilities for all feasible occupancies. In each iteration, it first evaluates the continuation table \(H^{(k)}(y)=\mathbb E[v^{(k)}(y-D)\mid y]\) for all feasible post-decision vectors $y$. It then updates each state by minimizing \(\sum_{i=1}^n \pi_i(x_i+a_i-y_i)+H^{(k)}(y)\) over $y\in\mathcal U(x,a)$ and averaging over arrivals. The minimization is factored through tail loads \(Y_i=\sum_{k=i}^n y_k\), which avoids enumerating infeasible placements. For Poisson arrivals, the sums are evaluated exactly up to saturation thresholds and the remaining Poisson tails are aggregated analytically.

\begin{algorithm}[htbp]
\caption{Relative value iteration for the multi-size locker admission MDP}
\label{alg:nsize-vi}
\begin{algorithmic}[1]
\State \textbf{Input:} capacities $b$, pickup probability $p$, arrivals, penalties $\pi$, tolerance $\varepsilon$, maximum iterations $K_{\max}$
\State Enumerate the feasible state space $\mathcal S_n$
\State Set $v^{(0)}(x)\gets 0$ for all $x\in\mathcal S_n$ and choose reference state $0=(0,\dots,0)$
\State Precompute binomial pickup probabilities for all feasible occupancies
\For{$k=0,1,\dots,K_{\max}-1$}
    \State Compute $H^{(k)}(y)=\mathbb E[v^{(k)}(y-D)\mid y]$ for all feasible post-decision vectors $y$
    \For{each state $x\in\mathcal S_n$}
        \State Evaluate $(Tv^{(k)})(x)$ by averaging over arrivals and minimizing over $\mathcal U(x,a)$
    \EndFor
    \State Set $g^{(k)}\gets (Tv^{(k)})(0)$
    \State Normalize: $v^{(k+1)}(x)\gets (Tv^{(k)})(x)-g^{(k)}$ for all $x\in\mathcal S_n$
    \State Compute the span error \(\delta^{(k)}=\max_x(v^{(k+1)}(x)-v^{(k)}(x))-\min_x(v^{(k+1)}(x)-v^{(k)}(x))\)
    \If{$\delta^{(k)}\le \varepsilon$}
        \State \textbf{break}
    \EndIf
\EndFor
\State \textbf{Output:} average-cost estimate $g^{(k)}$, normalized value table $v^{(k+1)}$, and an optimal stationary admission policy
\end{algorithmic}
\end{algorithm}

For $n=2$, this algorithm is particularly fast because the final replenishment decision becomes one-dimensional after the size-2 occupancy has been fixed. The same implementation framework is used for the general $n$-size case, but exact value iteration is practical only for small systems.

\subsection{AA Policy}\label{sec:always-accept}
The always-accept (AA) policy processes arrivals after relocation in decreasing order of size. Size-$n$ parcels are inserted into available size-$n$ lockers, with any spillover rejected. Size-$(n-1)$ parcels are then inserted first into available size-$(n-1)$ lockers and then, if needed, into available size-$n$ lockers. More generally, a size-$i$ parcel is inserted first into size-$i$ lockers, then into size-$(i+1)$ lockers, and so on up to size $n$; only residual spillover is rejected.

The AA policy has a recursive representation once the capacity occupied by existing smaller-size parcels is reserved. Using the spillover quantities \(s_i(x)\) defined in Section~\ref{sec:nsize-model}, \(s_{i-1}(x)\) is the number of existing parcels from sizes \(1,\dots,i-1\) that must occupy lockers from sizes \(i,\dots,n\) after the morning relocation step. Starting with the largest size and moving downward, AA sets
\[
y_i^{AA}(x,a)
=
\min\left\{
x_i+a_i,\;
B_i-\sum_{k=i+1}^n y_k^{AA}(x,a)-s_{i-1}(x)
\right\},
\qquad i=n,n-1,\dots,1.
\]
The corresponding size-$i$ rejection count is \(r_i^{AA}(x,a)=x_i+a_i-y_i^{AA}(x,a)\).
This recursion preserves all parcels already present and decides only how many new arrivals to admit. The key question is whether it can be optimal to reject a size-$i$ arrival while a compatible larger locker is available.

For the sufficient-condition argument, view interface $i$ as the boundary between a smaller-size aggregate $\{1,\dots,i\}$ and a larger-size aggregate $\{i+1,\dots,n\}$. Define \(H_i=\sum_{k=i+1}^n b_k\), \(i=1,\dots,n-1\). Suppose that immediately after replenishment, exactly $m\in\{0,\dots,H_i\}$ parcels from sizes $1,\dots,i$ occupy lockers from sizes $i+1,\dots,n$. Then the total number of smaller-size parcels in the system is $C_i+m$, because all lockers of sizes $1,\dots,i$ are full whenever $m>0$. Let \(D_{i,m}\sim \mathrm{Bin}(C_i+m,p)\) be the number of those smaller-size parcels that depart during the day. After the next morning relocation step, the number of smaller-size parcels that still occupy lockers from sizes $i+1,\dots,n$ is \(R_{i,m}=(m-D_{i,m})^+\). Hence the event that current smaller-size use of larger-size capacity can still matter one cycle later is \(\{R_{i,m}>0\}=\{D_{i,m}<m\}\). Define \(q_{i,m}:=\Pr(D_{i,m}<m)=\Pr(\mathrm{Bin}(C_i+m,p)<m)\).
A coupling argument gives that $q_{i,m}$ is non-decreasing in $m$, so
\[
q_{i,m}\le q_i^{\max}:=q_{i,H_i}
=
\Pr\!\left(\mathrm{Bin}\!\left(\sum_{k=1}^n b_k,p\right)<H_i\right).
\]

Thus \(q_i^{\max}\) upper-bounds the probability that spillover across the boundary above size \(i\) survives one cycle and still occupies larger-size capacity after the next relocation step. The current spillover episode matters only until it first clears; later spillover is caused by later AA admissions and is charged separately.

\subsection{AA Optimality}\label{sec:always-accept-opt}
\begin{theorem}\label{thm:nsize-always-accept}
The AA policy is optimal when \(\pi_i \ge \pi_n q_i^{\max}\) for every size \(i=1,\dots,n-1\).
\end{theorem}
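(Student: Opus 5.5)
We argue by a pathwise interchange (coupling) argument rather than by establishing structural properties of \(h\). Fix an arbitrary stationary policy \(\mu\), a state \(x\), and an arrival realization \(a\). Suppose \(\mu\) rejects a size-\(i\) arrival (\(i<n\)) although a compatible locker of size \(j>i\) is free after the larger sizes have been processed; by the definition of \(\mathcal U(x,a)\) this can only happen with \(j>i\), since own-size capacity must be used. We construct a modified policy \(\mu'\) that admits this one extra parcel and otherwise mimics \(\mu\). We then show that the expected total rejection cost of \(\mu'\) is no larger than that of \(\mu\). Repeating the exchange (finitely many states, arrival vectors and rejection opportunities, processed from the largest size downward) turns any policy into AA without increasing cost, so AA attains \(g\).

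\textbf{Coupling.} Run the \(\mu\)-system and the \(\mu'\)-system on the same arrival sequence and the same pickup coins. Since pickups are memoryless with common probability \(p\) and independent of size, we may tag the extra parcel and give every other parcel identical departure times in both systems. Until the extra parcel leaves, \(\mu'\) copies \(\mu\)'s admission decisions. The extra parcel then consumes at most one unit of capacity in lockers of sizes \(i+1,\dots,n\) that \(\mu\) has free. The discrepancy disappears either when the extra parcel is picked up or when, after a relocation step, it no longer occupies a locker larger than size \(i\): it has migrated down, or capacity has been freed.

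The immediate gain of \(\mu'\) is \(\pi_i\). Its only possible loss is that, in some later cycle while the discrepancy persists, \(\mu\) admits a parcel into the very unit \(\mu'\) is using, so \(\mu'\) must reject one parcel that \(\mu\) accepts. That parcel has penalty at most \(\pi_n\). After this single rejection the two systems coincide exactly, because the extra parcel plays the role of the rejected one and occupies the same count of capacity. So the loss is at most \(\pi_n\) times the probability that a blocking event occurs. A blocking event can occur in the next cycle only if the extra parcel still occupies larger-size capacity after the next relocation, i.e.\ if the smaller-size spillover across interface \(i\) survives one cycle. That probability is \(q_{i,m}\le q_i^{\max}\), using the monotonicity of \(q_{i,m}\) in \(m\) stated before the theorem.

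\textbf{Main obstacle.} The delicate point is that the discrepancy may persist more than one cycle, which would apparently compound \(q_i^{\max}\) over many periods; bounding by one cycle requires care. To handle this, we charge each spillover episode only until it first clears, as the remark preceding the theorem suggests. If the extra parcel survives one cycle, then at the start of the next cycle the \(\mu'\)-system is in a state where the same comparison applies afresh. We can then re-apply the exchange at that point, viewing continued occupancy as a fresh admission decision by a policy that could instead have discarded the parcel. This is a formal induction on the horizon of a finite-horizon discounted or truncated version. Concretely, the plan is an induction on the number of remaining periods \(N\) in a finite-horizon cost \(V_N\), showing that keeping one extra size-\(i\) parcel in larger capacity changes \(V_N\) by at most \(\pi_n q_i^{\max}\). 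We then pass to the average-cost limit via \(g=\lim V_N/N\), which is valid since the chain is unichain. The hypothesis \(\pi_i\ge \pi_n q_i^{\max}\) then makes the exchange weakly improving, and optimality of AA follows.
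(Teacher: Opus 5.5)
\textbf{First gap: the persistence bound.} The step that fails is your bound on the blocking probability by $q_{i,m}\le q_i^{\max}$. For $i\ge 2$, the formula $q_{i,m}=\Pr(\mathrm{Bin}(C_i+m,p)<m)$, which you import from the text before the theorem, assumes that all lockers of sizes $1,\dots,i$ are full whenever there is spillover across interface $i$. A size-$i$ parcel cannot enter lockers smaller than $i$, so its spillover can persist while those lockers sit empty. With this, the statement itself fails for $n=3$. Take $b=(20,1,1)$, $p=\tfrac12$ and $\pi=(1,2,20)$, with no size-1 arrivals, two size-2 arrivals every cycle, and one size-3 arrival with probability $\tfrac12$. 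Then $q_1^{\max}=23\cdot 2^{-22}$ and $q_2^{\max}=2^{-22}$, so the hypothesis holds by a huge margin. Meanwhile sizes 2 and 3 evolve exactly as a two-size system with $b=(1,1)$.

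Under AA, both usable lockers are full after every replenishment. The post-replenishment chain goes from ``two size-2 parcels'' to ``one parcel of each size'' with probability $\tfrac38$, and back with probability $\tfrac14$. Hence the size-3 locker is still occupied in the morning with probability $\tfrac25\cdot\tfrac14+\tfrac35\cdot\tfrac12=\tfrac25$, and size-3 and size-2 parcels are admitted at rates $0.3$ and $0.7$. The admissible policy that never places a size-2 parcel in the size-3 locker admits them at rates $\tfrac13$ and $\tfrac12$. Its average cost is therefore lower by $20(\tfrac13-0.3)-2(0.7-0.5)=\tfrac{4}{15}>0$; giving the empty arrival vector a small positive probability changes these rates only slightly. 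The paper's one-step-deviation proof uses the same bound for the survival of a spillover episode, so it does not escape this either. The bound is valid only at $i=1$.

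\textbf{Second gap: the accounting, even at $i=1$.} The discrepancy does not disappear when the extra parcel migrates below the interface. It still occupies a locker, so it can displace a later arrival of size $\le i$ or push one across the interface again. Take $n=2$ and $b=(1,1)$, with no size-2 arrivals and two size-1 arrivals every cycle. The extra parcel displaces a later size-1 arrival exactly when it survives its first pickup, so it raises the cost-to-go by $(1-p)\pi_1$. This exceeds $\pi_2 q_1^{\max}=(1-p)^2\pi_2$ whenever $\pi_1>(1-p)\pi_2$. So your inductive lemma is false, and ``loss at most $\pi_n$ times a one-cycle survival probability'' is the wrong comparison. What has to be shown is that the full expected future charge is at most $\pi_i$, where blocks of sizes $\le i$ cost up to $\pi_i$ and larger ones up to $\pi_n$.

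Three further steps also fail:
\begin{itemize}
\item The claim that the two systems coincide after one block is false when the blocked parcel has size $k>i$. The states then differ in composition: the size-$i$ parcel can later move down while the size-$k$ parcel cannot, the forced own-size admissions differ, and $\mu'$ can no longer copy $\mu$.
\item Treating continued occupancy as a fresh admission decision presupposes that admitted parcels can be discarded. The model forbids this ($y\ge x$), and it earns no new $\pi_i$; that is the preemptive relaxation, not the original MDP.
\item A stationary $\mu$ rejects at a given $(x,a)$ infinitely often, so finitely many pathwise exchanges do not turn it into AA.
\end{itemize}
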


\begin{proof}
We use a one-step deviation argument for the finite unichain average-cost MDP, following the standard average-cost policy-improvement logic for finite Markov decision processes \citep[Chapter~8]{Puterman1994}. It is enough to show that, after any state and arrival realization, no marginal rejection of a parcel that AA would admit can improve the Bellman comparison in \eqref{eq:nsize-bellman}. The feasible deviations from \(y^{AA}(x,a)\) form
the set of admissible post-decision vectors \(y\in\mathcal U(x,a)\) with
\(y\le y^{AA}(x,a)\). Any such \(y\) can be reached from \(y^{AA}(x, a)\) by a
finite sequence of single-size unit decrements, processed in the same
decreasing size order as AA. If no single decrement strictly improves
the Bellman comparison, then no finite sequence of such decrements can improve
it either. Hence, it suffices to verify the marginal case.

Fix a size \(i<n\). We call a boundary-\(i\) spillover episode a consecutive
sequence of replenishment cycles in which at least one parcel from sizes
\(1,\dots,i\) is stored in a locker from sizes \(i+1,\dots,n\). The episode
starts when AA inserts a size-\(i\) parcel into a larger locker, and it ends
immediately after the first relocation step, at which no parcel from sizes
\(1,\dots,i\) remains in the larger lockers. The marginal size-\(i\) admission
that starts the episode gives a deterministic saving of \(\pi_i\),
because rejecting it would have incurred that penalty.

The only possible downside of this marginal admission is that, before the
episode ends, it may occupy capacity that could otherwise have admitted a
larger-size parcel. This downside is limited in two ways. First, if the
spillover episode ends at the next relocation step, then the marginal admission
has no further effect on larger-size parcels, either in the next cycle or in
any later cycle. When the episode starts with \(m\) lower-side parcels in
higher-side lockers, the probability that it does not end at the next
relocation step is \(q_{i,m}\le q_i^{\max}\). Since \(p>0\), the same bound also implies that a spillover episode clears almost surely after repeated cycles, so the sample-path comparison below charges only a finite episode.

Second, conditional on the episode surviving, the marginal admission can be
charged for at most one future rejection. If the rejected parcel is from one of
the sizes \(1,\dots,i\), its penalty is at most \(\pi_i\), and the saving from admitting the marginal size-\(i\) parcel already pays for that
charge. The only residual risk is therefore that the marginal admission causes a future rejection of a larger-size parcel.

To bound this residual risk, consider the first future larger-size parcel, if
any, that is rejected because the marginal smaller-size parcel was previously
admitted. In the comparison system in which the marginal parcel was rejected,
this larger-size parcel would be admitted. From that point onward, use a sample-path coupling: assign the accepted larger-size parcel in the comparison system and the smaller-size spillover token in the AA system the same memoryless pickup realization. Until their common pickup time, the two systems differ by the identity of the parcel occupying one larger-size locker, not by the amount of larger-size capacity occupied. After that pickup time, both systems release that capacity. Thus this token cannot be responsible for a second larger-size rejection. If a later smaller-size parcel enters a larger locker after the original episode has cleared, that is a new AA marginal admission and is charged separately.
Therefore the expected residual opportunity cost of a marginal size-\(i\)
spillover admission is at most \(\pi_n q_i^{\max}\): with probability at most
\(q_i^{\max}\) it may be charged one rejected larger-size parcel, and the
penalty of such a parcel is at most \(\pi_n\). Under the stated condition
\(\pi_i\ge \pi_n q_i^{\max}\), the saving from accepting the marginal
size-\(i\) parcel weakly dominates this worst-case residual opportunity cost.
Hence no marginal spillover rejection is improving. Marginal rejections that do
not use larger lockers are even simpler: by the same memoryless coupling, accepting such a parcel can at worst displace one future parcel of the same size, so it cannot be worse than rejecting it immediately. Thus, no one-step deviation from AA is improving; that is, AA is optimal.
\end{proof}

\begin{remark}
When $n=2$, the condition in Theorem~\ref{thm:nsize-always-accept} reduces to
\(\pi_1\ge \pi_2\Pr(\mathrm{Bin}(b_1+b_2,p)<b_2)\) as a small parcel placed in a large locker can matter in the future only if fewer than the current number of small spillover parcels are picked up before the next relocation step.
\end{remark}

\begin{remark} 
Theorem~\ref{thm:nsize-always-accept} replaces the exact continuation-value comparison by a one-cycle worst-case bound at each interface, and upper-bounds the loss from one blocked future parcel by the maximal penalty $\pi_n$. 
\end{remark}

In many realistic systems, $H_i$ is small relative to the total capacity and $p$ is not too small, so the persistence probabilities \(q_i^{\max}\) can be quite small.
The numerical experiments use exact value iteration to evaluate cases not covered by the conservative sufficient condition.

When Theorem~\ref{thm:nsize-always-accept} certifies AA, evaluation reduces to a fixed-policy problem where the admission decision is fixed, and the system evolves as a finite discrete-time Markov chain on \(\mathcal S_n\) rather than as a controlled MDP. The average rejection cost can be computed by constructing the transition matrix induced by the AA policy, solving for its stationary distribution, and averaging the one-period rejection cost over that distribution. This fixed-policy DTMC calculation is much cheaper than value iteration because it avoids Bellman minimization over feasible post-decision states. Moreover, under the theorem's condition, the DTMC value is not an approximation to the MDP value; it is the exact optimal average rejection cost.

Figure~\ref{fig:aa-dtmc-small-example} illustrates this fixed-policy DTMC for a small two-size system with $b=(2,1)$. The graph is drawn as a two-stage directed support graph. The left and right columns are duplicate copies of the same before-replenishment DTMC state space, marked by $\mathcal B$; they are separated only to make one transition cycle visually clear. A before-replenishment state first moves to an after-replenishment state, marked by $\mathcal A$, determined by random arrivals and the AA policy. Then pickups move this state to a state in the next before-replenishment copy, and the ellipses indicate that the same cycle repeats.

\begin{figure}[htbp]
\centering
\resizebox{\textwidth}{!}{
\begin{tikzpicture}[
    >=Latex,
    state/.style={circle,draw,fill=white,minimum size=8mm,inner sep=0pt,font=\tiny},
    poststate/.style={circle,draw,fill=black!7,minimum size=8mm,inner sep=0pt,font=\tiny},
    admitedge/.style={-{Stealth[length=1.1mm,width=0.8mm]},draw=black!35,line width=0.25pt},
    pickupedge/.style={-{Stealth[length=1.2mm,width=0.85mm]},draw=black!70,line width=0.35pt},
    note/.style={align=center,font=\scriptsize},
]

\node[font=\small\bfseries] at (0,3.75) {Before replenishment};
\node[font=\small\bfseries] at (5.3,3.75) {After replenishment};
\node[font=\small\bfseries] at (10.6,3.75) {Before replenishment};
\node[note] at (5.3,4.35)
{$\mathcal S_2=\{ (x_1,x_2):x_2\le 1,\ x_1+x_2\le 3\}$};

\node[state] (l00) at (0,3) {$(\mathcal B,0,0)$};
\node[state] (l10) at (0,2) {$(\mathcal B,1,0)$};
\node[state] (l20) at (0,1) {$(\mathcal B,2,0)$};
\node[state] (l30) at (0,0) {$(\mathcal B,3,0)$};
\node[state] (l01) at (0,-1) {$(\mathcal B,0,1)$};
\node[state] (l11) at (0,-2) {$(\mathcal B,1,1)$};
\node[state] (l21) at (0,-3) {$(\mathcal B,2,1)$};

\node[poststate] (m00) at (5.3,3) {$(\mathcal A,0,0)$};
\node[poststate] (m10) at (5.3,2) {$(\mathcal A,1,0)$};
\node[poststate] (m20) at (5.3,1) {$(\mathcal A,2,0)$};
\node[poststate] (m30) at (5.3,0) {$(\mathcal A,3,0)$};
\node[poststate] (m01) at (5.3,-1) {$(\mathcal A,0,1)$};
\node[poststate] (m11) at (5.3,-2) {$(\mathcal A,1,1)$};
\node[poststate] (m21) at (5.3,-3) {$(\mathcal A,2,1)$};

\node[state] (r00) at (10.6,3) {$(\mathcal B,0,0)$};
\node[state] (r10) at (10.6,2) {$(\mathcal B,1,0)$};
\node[state] (r20) at (10.6,1) {$(\mathcal B,2,0)$};
\node[state] (r30) at (10.6,0) {$(\mathcal B,3,0)$};
\node[state] (r01) at (10.6,-1) {$(\mathcal B,0,1)$};
\node[state] (r11) at (10.6,-2) {$(\mathcal B,1,1)$};
\node[state] (r21) at (10.6,-3) {$(\mathcal B,2,1)$};
\node[font=\scriptsize] at (11.35,3) {$\cdots$};
\node[font=\scriptsize] at (11.35,2) {$\cdots$};
\node[font=\scriptsize] at (11.35,1) {$\cdots$};
\node[font=\scriptsize] at (11.35,0) {$\cdots$};
\node[font=\scriptsize] at (11.35,-1) {$\cdots$};
\node[font=\scriptsize] at (11.35,-2) {$\cdots$};
\node[font=\scriptsize] at (11.35,-3) {$\cdots$};

\foreach \m in {m00,m10,m20,m30,m01,m11,m21}{
    \draw[admitedge] (l00.east) -- (\m.west);
}
\foreach \m in {m10,m20,m30,m11,m21}{
    \draw[admitedge] (l10.east) -- (\m.west);
}
\foreach \m in {m20,m30,m21}{
    \draw[admitedge] (l20.east) -- (\m.west);
}
\draw[admitedge] (l30.east) -- (m30.west);
\foreach \m in {m01,m11,m21}{
    \draw[admitedge] (l01.east) -- (\m.west);
}
\foreach \m in {m11,m21}{
    \draw[admitedge] (l11.east) -- (\m.west);
}
\draw[admitedge] (l21.east) -- (m21.west);

\draw[pickupedge] (m00.east) -- (r00.west);
\foreach \r in {r00,r10}{
    \draw[pickupedge] (m10.east) -- (\r.west);
}
\foreach \r in {r00,r10,r20}{
    \draw[pickupedge] (m20.east) -- (\r.west);
}
\foreach \r in {r00,r10,r20,r30}{
    \draw[pickupedge] (m30.east) -- (\r.west);
}
\foreach \r in {r00,r01}{
    \draw[pickupedge] (m01.east) -- (\r.west);
}
\foreach \r in {r00,r10,r01,r11}{
    \draw[pickupedge] (m11.east) -- (\r.west);
}
\foreach \r in {r00,r10,r20,r01,r11,r21}{
    \draw[pickupedge] (m21.east) -- (\r.west);
}
\node[note] at (2.65,-3.85) {random arrivals\\and AA admission};
\node[note] at (7.95,-3.85) {independent pickups};

\end{tikzpicture}
}
\figcaption{DTMC transition support induced by AA for \(b=(2,1)\). The before-replenishment state space is duplicated to show one cycle: arrivals and AA determine an after-replenishment state, and pickups map it to component-wise smaller before states in the next copy. \label{fig:aa-dtmc-small-example}}

\end{figure}

\section{Numerical Evidence on the Admission Policy Problem}\label{sec:numerical-evidence}

The numerical analysis tests whether AA is optimal across broad parameter ranges. Robustness checks for non-memoryless pickup times are reported in the supplementary material. We begin with two-size systems, which permit broad exact sweeps. 
To report demand intensity on a common scale across sizes, define \(\rho_i=\lambda_i/(p b_i)\). 
Under the no-spillover benchmark, a parcel stays in the system for an average of $1/p$ replenishment cycles, so $\lambda_i/p$ is the corresponding mean number of size-$i$ parcels in the system. Dividing by $b_i$ gives the mean load per locker size.

\begin{table}[htbp]
\centering
\caption{Two-size experiments}
\label{tab:load-sweep-summary}
\begin{tabular}{p{0.34\textwidth}p{0.56\textwidth}}
\toprule
Capacities & \((b_1,b_2)\in\{(20,10),(30,10),(40,20),\)\\
& \((60,20),(60,30),(90,30)\}\) \\
Pickup probabilities & $p\in\{0.25,0.33,0.5\}$ \\
Load grid & \(\rho_1,\rho_2\in\{0.8,1.0,1.2\}\), implemented by \(\lambda_i=\rho_i p b_i\), \(i=1,2\) \\
Penalty grid & $\pi_1=1$, $\pi_2\in\{2,4,8,16\}$ \\
Number of instances & $6\times 3\times 3\times 3\times 4=648$ \\
Iterations to convergence & min $=17$, median $=41$, max $=72$ \\
Solve time per instance (seconds) & min $=0.0014$, mean $=0.096$, max $=0.481$ \\
State-space size & from $286$ to $3{,}286$ states \\
\bottomrule
\end{tabular}
\end{table}

Table~\ref{tab:load-sweep-summary} summarizes an experiment varying capacities, pickup probability, normalized loads, and the penalty ratio. Computationally, the exact two-size MDP remains tractable, all 648 instances converged, with state spaces ranging from 286 to 3,286 states and solve times below 0.5 seconds.

In 647 of the 648 exact-value-iteration runs, AA is the optimal admission policy; that is, after the size-2 parcels have been admitted, every feasible size-1 parcel is accepted into the remaining compatible locker capacity. The only exception occurs in the smallest system, \(b=(b_1,b_2)=(20,10)\), under the combination \(p=0.25\), \((\rho_1,\rho_2)=(1.2,1.2)\), and \(\pi_2/\pi_1=16\). A tighter value-iteration run keeps this instance classified as non-AA, so we do not treat it as a stopping-tolerance artifact. However, an independent fixed-policy evaluation of AA gives a relative gap of only \(4.7\times 10^{-8}\). Thus, even in this exceptional case, AA is practically indistinguishable from optimal.

Figures~\ref{fig:always-accept-certification1} and~\ref{fig:always-accept-certification} show where AA is certified as optimal over a wider capacity grid with  $\rho_1=\rho_2=1$. Diagonally striped cells correspond to cases in which Theorem~\ref{thm:nsize-always-accept} applies. Dotted cells correspond to cases in which the exact two-size value iteration implies optimality for AA. White cells with numeric labels are cases in which AA is not optimal; the number shown is the relative gap $(g^{AA}-g^*)/g^*$. Figure~\ref{fig:always-accept-certification} deliberately uses the stress-test ratio \(\pi_2/\pi_1=20\), beyond the main two-size grid, to show where the AA policy begins to degrade in the smallest and slowest-pickup systems.

\begin{figure}[htbp]
\centering
\includegraphics[width=\textwidth]{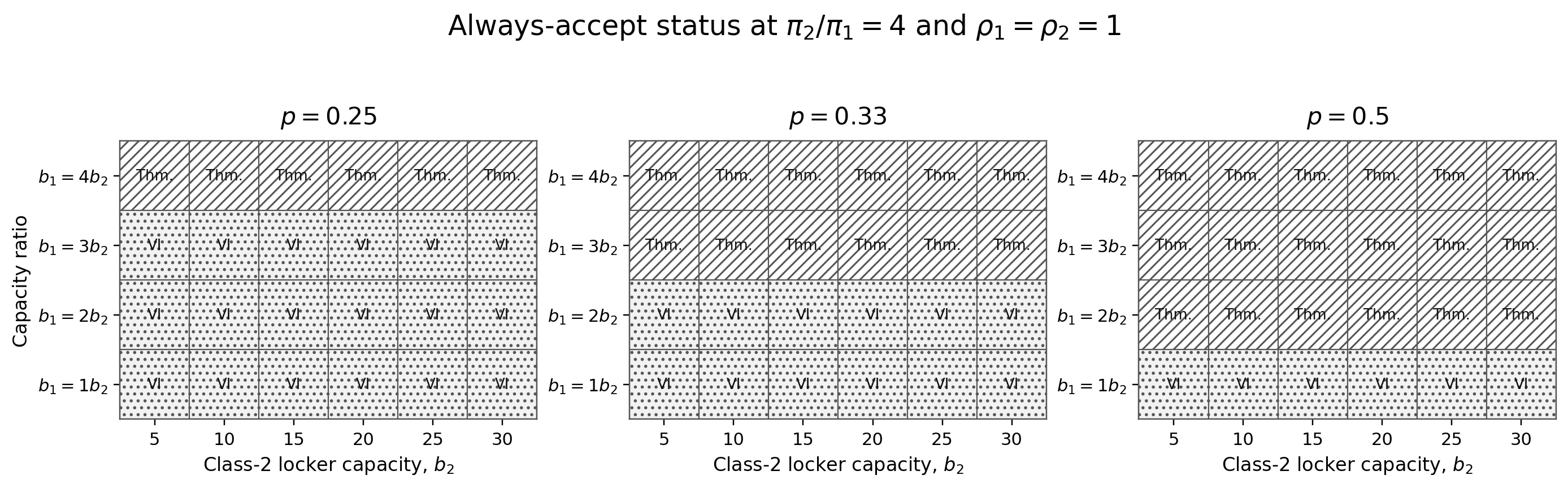}
\figcaption{AA optimality for \(\pi_2/\pi_1=4\) and \(\rho_1=\rho_2=1\). Optimality is obtained by Theorem~\ref{thm:nsize-always-accept} in striped cells and by exact value iteration in dotted cells. \label{fig:always-accept-certification1}}
\end{figure}

\begin{figure}[htbp]
\centering
\includegraphics[width=\textwidth]{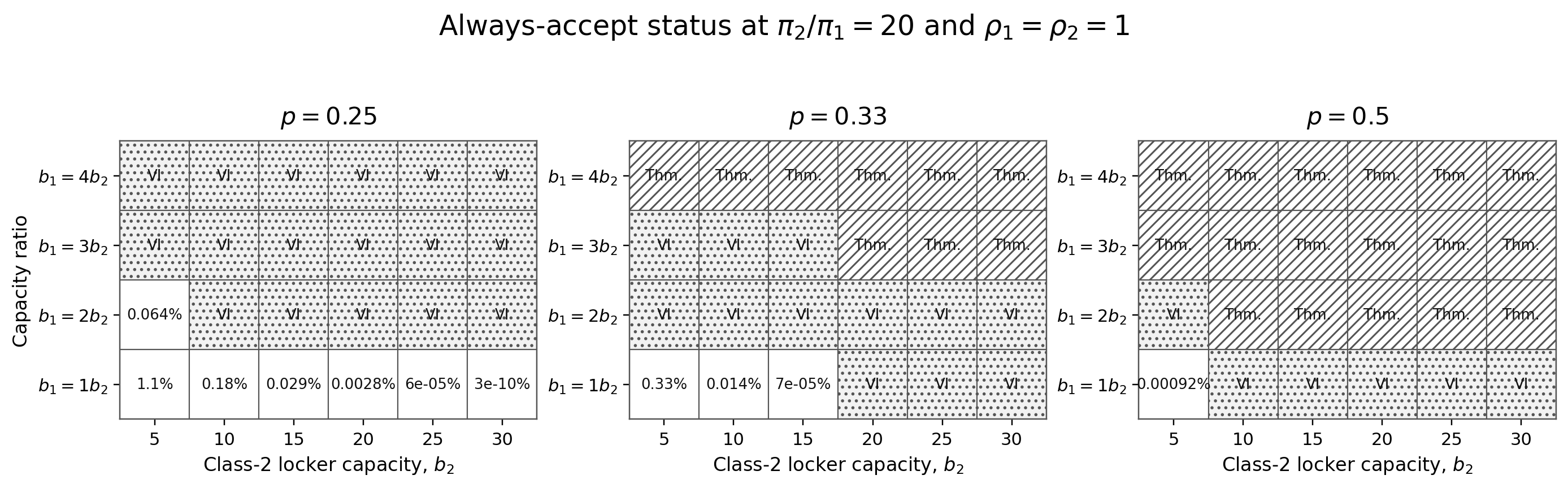}
\figcaption{Stress-test AA optimality for \(\pi_2/\pi_1=20\) and \(\rho_1=\rho_2=1\). White cells report the relative gap. \label{fig:always-accept-certification}}
\end{figure}

We next examine instances with  $n=3$ and $4$ parcel classes. Table~\ref{tab:nsize-experiment-design} summarizes the corresponding parameter grids.
Systems with three sizes, small, medium, and large, are used in empirical studies of locker configuration \citep{RanjbariEtAl2023RightSize}, while locker-bank design models use module data with small, medium, large, and extra-large sizes when optimizing physical layouts \citep{FaugereMontreuil2020,Kahr2022}. Thus, the tested cases cover the common small--medium--large setting and an extension with an extra-large size, while the MDP formulation itself allows any number of sizes. In all capacity vectors, the number of lockers decreases as size increases, reflecting the fact that larger lockers are usually less numerous. 

\begin{table}[htbp]
\centering
\caption{Multi-size numerical experiment design}
\label{tab:nsize-experiment-design}
\small
\begin{tabular}{@{}>{\raggedright\arraybackslash}p{0.25\textwidth}>{\raggedright\arraybackslash}p{0.68\textwidth}@{}}
\midrule
Capacity vectors & 3 sizes: $(12,4,2)$, $(16,5,3)$, $(24,8,4)$. 4 sizes: $(12,3,2,1)$, $(16,4,2,1)$, $(20,5,3,1)$. \\
Penalty vectors & 3 sizes: $(1,2,4)$, $(1,2,8)$, $(1,4,16)$. 4 sizes: $(1,2,4,8)$, $(1,2,4,16)$, $(1,3,9,27)$. \\
Pickup probabilities & $p\in\{0.25,0.33,0.5\}$. \\
Balanced load profiles & $(0.8,\dots,0.8)$, $(1,\dots,1)$, and $(1.2,\dots,1.2)$. \\
Unbalanced load profiles & 3 sizes: $(1.2,1,0.8)$, $(0.8,1,1.2)$. 4 sizes: $(1.2,1.05,0.9,0.75)$, $(0.75,0.9,1.05,1.2)$. \\
Total instances & $270$. \\
\bottomrule
\end{tabular}
\normalsize
\vspace{-0.5\baselineskip}
\end{table}

All 270 instances converged under the C++ value iteration implementation. The state-space sizes range from $280$ to $1{,}715$ states, the iteration counts range from $19$ to $67$, and the mean solve time is $9.5$ seconds, with a median of $2.2$ seconds and a maximum of $44.7$ seconds. Thus, these examples remain small enough for exact optimization, but they already illustrate the rapid dimensional growth discussed in Section~\ref{sec:nsize-model}.

Table~\ref{tab:nsize-sweep-summary} reports the main outcomes. The one-cycle condition in Theorem~\ref{thm:nsize-always-accept} certifies $85$ of the $270$ instances. Exact value iteration finds that AA is optimal in $221$ instances. In the remaining $49$ instances, exact value iteration finds a reservation policy with a lower average cost, but the economic effect is very small: the largest relative gap \((g^{AA}-g^*)/g^*\) is $0.1004\%$, the mean positive gap is $0.0171\%$, and the median positive gap is $0.0052\%$.

\begin{table}[htbp]
\centering
\caption{Exact multi-size value iteration sweep}
\label{tab:nsize-sweep-summary}
\small
\begin{tabular}{lcccccc}
\toprule
$n$ & Inst. & States & AA optimal & Thm.~\ref{thm:nsize-always-accept} & Max gap \\
\midrule
3 & 135 & $280$--$1{,}655$ & 117/135 & 45 & $0.0688\%$ \\
4 & 135 & $564$--$1{,}715$ & 104/135 & 40 & $0.1004\%$ \\
\bottomrule
\end{tabular}
\normalsize
\vspace{-0.5\baselineskip}
\end{table}

The pickup probability is the clearest driver of the results. Table~\ref{tab:nsize-sweep-by-p} shows that AA is optimal for all $90$ instances with $p=0.5$, with $85$ of them being certified by Theorem~\ref{thm:nsize-always-accept}. At $p=0.33$, AA is optimal in $83$ of $90$ instances, and the largest gap in the remaining cases is only $0.0043\%$. The nontrivial deviations are concentrated almost entirely at $p=0.25$, where parcels remain in the lockers longer, and the opportunity cost of smaller-size spillover is highest.

\begin{table}[htbp]
\centering
\caption{Multi-size sweep by pickup probability}
\label{tab:nsize-sweep-by-p}
\small
\begin{tabular}{ccccc}
\toprule
$p$ & AA Optimal & Thm.~\ref{thm:nsize-always-accept} & Max gap \\
\midrule
0.25 & 48/90 & 0 &  $0.1004\%$ \\
0.33 & 83/90 & 0 &  $0.0043\%$ \\
0.50 & 90/90 & 85 & $0.0000\%$ \\
\bottomrule
\end{tabular}
\normalsize
\end{table}

Equivalently, the instances in which AA is not optimal are the instances with long mean holding times. Since the geometric pickup model has mean holding time $1/p$, the suboptimal cases occur only at $p=0.25$ and $p=0.33$, corresponding to mean holding times of $4.0$ and approximately $3.0$ replenishment cycles. No suboptimal case appears at $p=0.5$, where the mean holding time is $2.0$ cycles. This pattern is consistent with the intuition behind the sufficient condition: a smaller-size parcel admitted into larger-size capacity can create a meaningful opportunity cost only if it is likely to remain in the system long enough to compete with future larger-size arrivals.

We ran a separate scaled three-size experiment to test whether deviations from AA become less important as the system size grows. Starting from the base composition $(6,2,1)$, we considered the scaled capacity vectors \((6\sigma,2\sigma,\sigma)\), \(\sigma=1,2\dots,5\), using the same pickup probabilities, penalty vectors, and five normalized-load profiles as in Table~\ref{tab:nsize-experiment-design}. This yields $225$ additional exact value iteration instances. Table~\ref{tab:nsize-scale-summary} summarizes the results by scale.

\begin{table}[htbp]
\centering
\caption{Scaled three-size experiment}
\label{tab:nsize-scale-summary}
\small
\begin{tabular}{ccccc}
\toprule
$\sigma$ & Capacities & AA optimal & Max gap \\
\midrule
1 & $(6,2,1)$ & 40/45 & $0.1663\%$ \\
2 & $(12,4,2)$ & 39/45 &  $0.0649\%$ \\
3 & $(18,6,3)$ & 41/45 &  $0.0264\%$ \\
4 & $(24,8,4)$ & 42/45 & $0.0045\%$ \\
5 & $(30,10,5)$ & 43/45 & $0.0014\%$ \\
\bottomrule
\end{tabular}
\normalsize
\end{table}

The maximum relative gap falls sharply from $0.1663\%$ at $\sigma=1$ to $0.0014\%$ at $\sigma=5$, and the mean gap falls from $0.0079\%$ to $0.00003\%$. In a comparison of the 45 parameter combinations across all five scales, 43 have non-increasing optimality gaps. The two exceptions are cases in which AA is optimal at $\sigma=1$, has a small positive gap at $\sigma=2$, and is optimal for larger scales. Across the tested grids, AA is effectively optimal for operational purposes. The only meaningful deviations occur in slow-pickup, high-penalty-ratio cases, and even there the relative cost gaps remain small. Thus, AA provides a strong baseline for the locker-assortment analysis. The exact MDP is useful for detecting the small reservation effects that can arise in very small systems when pickup probabilities are low and penalty ratios are steep.

\section{Locker Assortment Problem}\label{sec:locker-assortment}

We consider the locker-assortment problem: given a fixed installation cost \(f_i\) for each size-\(i\) locker, how many lockers of each size should be installed to minimize the combined facility and rejection costs? We express the facility cost as a daily equivalent cost, so it can be added to the steady-state daily rejection cost. Thus, the locker-assortment combined cost is \(g^*(b)+\sum_{i=1}^n f_i b_i\), where \(g^*(b)\) is the optimal steady-state rejection cost over all admissible policies. Since the facility cost component is linear, we focus on the steady-state rejection cost \(g^*(b)\).
Let $g^{AA}(b)$ denote the cost under the AA policy. We next examine when AA is optimal; that is, when \(g^*(b)=g^{AA}(b)\).

\subsection{Daily Rejection Cost}\label{sec:assortment-objective}

We first illustrate the daily rejection-cost function using the two-size locker-assortment sweep already summarized for the AA policy. In that experiment,  \((\lambda_1,\lambda_2,p,\pi_1,\pi_2)=(20,10,0.5,1,4)\), and the locker capacities vary over the expanded grid \(b_1=32,\dots,60\) and \(b_2=16,\dots,30\). Figure~\ref{fig:assortment-surface} plots the resulting optimal average daily penalty surface \(g^*(b_1,b_2)\). The surface decreases from the value at \((b_1,b_2)=(32,16)\) to the value at \((b_1,b_2)=(60,30)\), and all 435 points on this enlarged assortment grid still satisfy the AA condition. Thus, on this grid, \(g^*(b_1,b_2)=g^{AA}(b_1,b_2)\): the admission policy remains optimal while the cost surface can be used directly for assortment selection.

The locker-assortment surface exhibits a diminishing-returns pattern. On the tested grid, it is coordinate-wise discretely convex: every second forward difference in the large-locker direction is strictly positive, and every second forward difference in the small-locker direction is strictly positive.
The surface is supermodular on this grid. The mixed second difference
\(g^*(b_1+1,b_2+1)-g^*(b_1+1,b_2)-g^*(b_1,b_2+1)+g^*(b_1,b_2) \)
is positive throughout the grid. Because \(g^*\) is a cost function, this positive mixed difference means that the two capacity types act as partial substitutes: adding one type of locker reduces the marginal cost reduction from adding the other type. Thus, on this two-size grid, the surface exhibits smooth diminishing marginal benefits from each locker size and a substitution effect across locker sizes. It should not, however, be interpreted as evidence of \(L^\natural\)-convexity; that stronger property requires the full discrete midpoint inequality.

\begin{figure}[htbp]
\centering
\includegraphics[width=0.82\textwidth]{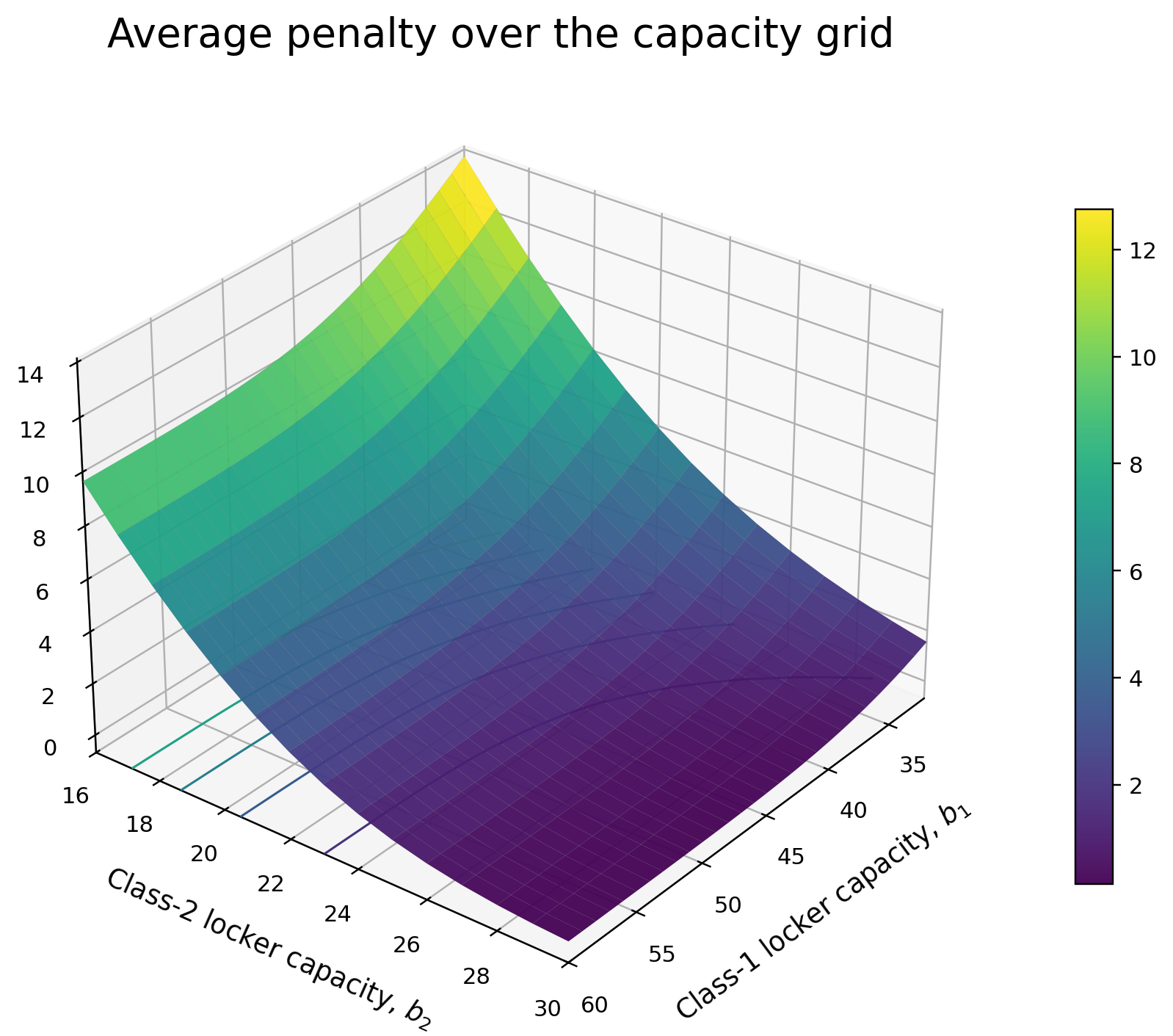}
\figcaption{Average daily penalty over the locker-assortment grid with \((\lambda_1,\lambda_2,p,\pi_1,\pi_2)=(20,10,0.5,1,4)\). \label{fig:assortment-surface}}

\end{figure}

\subsection{Discrete-Convexity}\label{sec:assortment-convexity}
Although Figure~\ref{fig:assortment-surface} suggests a regular cost surface on the tested grid, the following instances show $g^*(b)$ does not satisfy global discrete-convexity conditions such as \(L^\natural\)-convexity or \(M^\natural\)-convexity. 

\begin{counterexample}
 \label{ex:discrete-convexity-counterexample}
The optimized rejection cost function \(g^*(b)\) need not satisfy discrete-convexity properties that would make local search automatically certifiable. Because the facility term is linear, the same violations apply to the full locker-assortment objective \(C^*(b)\) for any fixed vector \(f\). First, consider the instance \(p=0.25\), \(\lambda=(0.2,0.2)\), and \(\pi=(1,2)\), where exact value iteration gives
\[
\begin{array}{c|cccc}
b & (1,1) & (1,2) & (2,1) & (2,2)\\
\hline
g^*(b)
& 0.228
& 0.088
& 0.183
& 0.060.
\end{array}
\]
AA is optimal at these four capacity vectors and the values are obtained from exact fixed-policy DTMC linear-system evaluations. Nevertheless, \(g^*(1,2)+g^*(2,1)=0.271<0.288=g^*(1,1)+g^*(2,2)\).
This violates the discrete midpoint inequality
\[
g(x)+g(y)\ge g\!\left(\left\lfloor \frac{x+y}{2}\right\rfloor\right)
          +g\!\left(\left\lceil \frac{x+y}{2}\right\rceil\right),
\]
with \(x=(1,2)\) and \(y=(2,1)\). Hence \(g^*\) is not \(L^\natural\)-convex.

Second, the optimized cost need not be supermodular. Consider instead \(p=0.25\), \(\lambda=(0.5,1)\), and \(\pi=(1,2)\).
Again, exact value iteration shows that AA is optimal at the following four capacity vectors, so the displayed values are exact fixed-policy DTMC evaluations:
\[
\begin{array}{c|cccc}
b & (1,1) & (1,2) & (2,1) & (2,2)\\
\hline
g^*(b)
& 1.8790
& 1.4736
& 1.7384
& 1.3326.
\end{array}
\]
These values satisfy \(g^*(1,1)+g^*(2,2)=3.2116<3.2120=g^*(2,1)+g^*(1,2)\),
which is the opposite of the supermodularity inequality. The same four-point inequality rules out \(M^\natural\)-convexity: take \(x=(2,2)\), \(y=(1,1)\), and move one unit of size-1 capacity from \(x\) to \(y\). Since \(x-y\) has no negative component, the \(M^\natural\)-exchange condition would require \(g^*(2,2)+g^*(1,1)\ge g^*(1,2)+g^*(2,1)\), which is false. The same inequality rules out multimodularity. For two variables, using the multimodular directions \(-e_1\), \(e_1-e_2\), and \(e_2\), take \(z=(2,1)\), \(d=-e_1\), and \(d'=e_2\). Multimodularity would require \(g^*(z+d)+g^*(z+d')\ge g^*(z)+g^*(z+d+d')\), that is, \(g^*(1,1)+g^*(2,2)\ge g^*(2,1)+g^*(1,2)\), again contradicted by the values above.
\end{counterexample}

These counterexamples do not imply that the locker-assortment objective lacks useful structure. 
They rule out only several global discrete-convexity routes that would justify a greedy or local-search algorithm. We have not proved or disproved all weaker
notions of discrete convexity or neighborhood unimodality that may hold on
practically relevant parameter ranges. Indeed, the numerical experiments are consistent with a well-behaved local-search landscape. However, the
bound-and-enumerate optimization method does not rely on convexity properties: local search is used to obtain a strong incumbent, while global optimality follows from partial enumeration of the incumbent simplex defined in Theorem~\ref{thm:incumbent-design-simplex}, valid relaxation lower bounds, and exact evaluation of the remaining candidates using the value iteration algorithm.

\subsection{Bound-and-Enumerate}\label{sec:assortment-optimization}
\begin{theorem}
\label{thm:incumbent-design-simplex}
Consider the locker assortment problem \(C^*(b)=g^*(b)+\sum_{i=1}^n f_i b_i\), where \(b\in\mathbb Z_+^n\), \(g^*(b)\ge 0\), and \(f_i>0\) is the unit cost of a size-\(i\) locker. Let \(U\) be any upper bound on the optimal objective value. Then every optimal solution \(b^*\) satisfies \(\sum_{i=1}^n f_i b_i^* \le U\).
Equivalently, at least one optimal solution is contained in the finite weighted simplex
\[
\mathcal D(U)
=
\left\{
b\in\mathbb Z_+^n:
\sum_{i=1}^n f_i b_i \le U
\right\}.
\]
More precisely, \(\mathcal D(U)\) is the set of integer lattice points in the
weighted simplex whose continuous relaxation is
\(\{b\in\mathbb R_+^n:\sum_i f_i b_i\le U\}\).
\end{theorem}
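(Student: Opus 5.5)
The argument is a direct sandwich inequality that uses only the nonnegativity of \(g^*\) and the definition of \(U\). I would also settle existence of an optimum, since the second sentence of the statement ("at least one optimal solution is contained") needs it.

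\textbf{Bound on any optimum.} Suppose \(b^*\) is optimal, so \(C^*(b^*)\le C^*(b)\) for all \(b\in\mathbb Z_+^n\). By definition of \(U\) as an upper bound on the optimal value, \(C^*(b^*)\le U\). Typically \(U=C^*(\hat b)\) for an incumbent \(\hat b\) found by local search, and this is valid for any feasible \(\hat b\). Since \(g^*(b^*)\ge 0\) (it is a long-run average of nonnegative rejection penalties), we get
\[
\sum_{i=1}^n f_i b_i^* \le g^*(b^*)+\sum_{i=1}^n f_i b_i^* = C^*(b^*)\le U .
\]
Hence \(b^*\in\mathcal D(U)\). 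This proves the first claim.

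\textbf{Existence of an optimum.} I would first show that \(\mathcal D(U)\) is finite. Because every \(f_i>0\), each coordinate satisfies \(0\le b_i\le U/f_i\), so \(\mathcal D(U)\) lies in a bounded box and contains finitely many lattice points. Next I would fix any feasible \(\hat b\) and set \(U=C^*(\hat b)\), for instance \(\hat b=0\), where \(g^*(0)\) is finite because all arrivals are rejected. Any \(b\notin\mathcal D(U)\) then has \(C^*(b)\ge \sum_i f_ib_i>U\ge \min_{\mathcal D(U)}C^*\). The minimum over the finite nonempty set \(\mathcal D(U)\), which contains \(\hat b\), therefore exists and is a global minimum, so an optimal solution exists and lies in \(\mathcal D(U)\).

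This argument relies on \(g^*(b)\) being well defined and finite for every \(b\). That holds because the MDP for each fixed \(b\) is a finite unichain model (Section~\ref{sec:problem-description}), and rejection costs per cycle are bounded in expectation by \(\sum_i\pi_i\mathbb E[A_i]<\infty\). For an arbitrary upper bound \(U\), existence follows the same way, with \(\mathcal D(U)\) nonempty because it contains an optimizer.

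\textbf{Continuous relaxation.} The final sentence is immediate: \(\mathcal D(U)\) is the intersection of \(\mathbb Z^n\) with the polytope \(\{b\in\mathbb R^n_+:\sum_i f_ib_i\le U\}\). That polytope is a simplex with vertices \(0\) and \((U/f_i)e_i\) when \(U>0\).

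\textbf{Main obstacle.} The proof is essentially one inequality. The only real care needed is the existence issue, namely that the set of minimizers is nonempty, and that is exactly what the finiteness argument above supplies.
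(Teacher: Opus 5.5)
Your core inequality $\sum_i f_i b_i^*\le g^*(b^*)+\sum_i f_i b_i^*=C^*(b^*)\le U$ is exactly the paper's proof. The paper simply assumes that an optimal $b^*$ exists. Your additional finiteness argument (each $b_i\le U/f_i$, with $U=C^*(0)<\infty$ as an initial bound) correctly supplies the existence that the ``at least one optimal solution'' phrasing requires, which the paper leaves implicit.
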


\begin{proof}
Let \(b^*\) be an optimal solution. Since \(U\) is an upper bound on the optimal objective value,
\(C^*(b^*)\le U\). Moreover, \(C^*(b^*)=g^*(b^*)+\sum_{i=1}^n f_i b_i^*\ge\sum_{i=1}^n f_i b_i^*\), because \(g^*(b^*)\ge 0\). Therefore, \(\sum_{i=1}^n f_i b_i^*\le C^*(b^*)\le U\).
Thus \(b^*\in\mathcal D(U)\).
\end{proof}

We next construct a lower bound on the cost of a given capacity vector \(b\) by relaxing the size-compatibility constraints within each tail set of sizes. Such a lower bound is instrumental when searching for an optimal solution in the simplex described by Theorem~\ref{thm:incumbent-design-simplex}.

The quantity \(r(\Lambda,B)\) is computed from the Markov chain
\(Y_t\in\{0,\ldots,B\}\), where \(Y_t\) is the number of occupied lockers
immediately after replenishment. Given \(Y_t=y\), let
\(D_t\sim\mathrm{Binomial}(y,p)\) be the number of pickups and
\(A_t\sim\mathrm{Poisson}(\Lambda)\) be the number of arrivals. Then \(Y_{t+1}=\min\{B,y-D_t+A_t\}\), and the one-period rejection count is \((y-D_t+A_t-B)^+\).
Thus \(r(\Lambda,B)\) is the stationary expectation of this rejection count.

\begin{theorem}
\label{thm:tail-pooled-lower-bound}
Consider an \(n\)-size locker system with capacities
\(b=(b_1,\ldots,b_n)\), independent Poisson arrivals with rates
\(\lambda_1,\ldots,\lambda_n\), pickup probability \(p\), and rejection
penalties
\(0<\pi_1<\pi_2<\cdots<\pi_n\).
Let \(\mu\) be any admissible stationary admission policy, and let
\(J^\mu(b)\) denote its steady-state rejection cost. For each tail set of
sizes \(i,\ldots,n\), define \(B_i(b)=\sum_{k=i}^n b_k\) and \(\Lambda_i=\sum_{k=i}^n \lambda_k\).
Let \(r(\Lambda,B)\) be the steady-state rejection rate in an idealized
single-size loss system with arrival rate \(\Lambda\), pickup probability
\(p\), and \(B\) fully pooled lockers. Then
\[
J^\mu(b)
\ge
\underline J(b)
:=
\sum_{i=1}^n
(\pi_i-\pi_{i-1})\,r(\Lambda_i,B_i(b)),
\qquad \pi_0:=0.
\]
Consequently, the same lower bound applies both to the AA policy
and to the optimal MDP value \(g^*(b)=\inf_\mu J^\mu(b)\).
\end{theorem}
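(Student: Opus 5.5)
The plan is to decompose the per-cycle penalty by tail sets and then bound each tail term by a pooled single-size loss system via a sample-path coupling.

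\textbf{Decomposition.} Write \(\pi_i=\sum_{j=1}^{i}(\pi_j-\pi_{j-1})\). The one-period cost \(\sum_i \pi_i R_i\), where \(R_i\) is the number of rejected size-\(i\) parcels, then becomes
\[
\sum_{i=1}^n \pi_i R_i=\sum_{j=1}^n(\pi_j-\pi_{j-1})\sum_{k=j}^n R_k .
\]
Each coefficient \(\pi_j-\pi_{j-1}\) is positive. It therefore suffices to show that, under any admissible stationary policy \(\mu\), the long-run average of the tail rejection count \(\sum_{k\ge j}R_k\) is at least \(r(\Lambda_j,B_j(b))\), for each \(j\).

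\textbf{Tail aggregate.} Fix \(j\). Parcels of sizes \(j,\dots,n\) can only occupy lockers of sizes \(j,\dots,n\), so their total post-replenishment count \(Z_t=\sum_{k\ge j}y_k\) satisfies \(Z_t\le B_j\). Their aggregate arrivals in a cycle are Poisson\((\Lambda_j)\). Each such parcel is picked up independently with probability \(p\), regardless of size or locker, and relocation never removes parcels. Hence \(Z_{t+1}\le Z_t-D_t+A_t\), with \(D_t\mid Z_t\sim\mathrm{Bin}(Z_t,p)\), and the tail rejections in the cycle equal \(Z_t-D_t+A_t-Z_{t+1}\). Smaller parcels may also occupy tail lockers, but this only lowers \(Z_{t+1}\).

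\textbf{Coupling.} Under \(\mu\), the tail process is therefore a controlled loss system that may reject even when space is free. I will compare it with the pooled greedy chain \(Y_t\) defined before the theorem, which admits up to capacity \(B\). Build both on a common probability space: use the same arrival stream, and give each parcel an i.i.d. geometric sojourn, pairing parcel identities so that the set of tail parcels present under \(\mu\) is always contained in the set present under the greedy chain. By induction on \(t\), I will show that greedy occupancy dominates \(\mu\)'s tail occupancy, and that cumulative greedy rejections are at most cumulative \(\mu\) tail rejections.

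\textbf{Main obstacle.} A naive induction fails here. Higher greedy occupancy can itself cause extra greedy rejections, so pathwise rejection domination is not immediate. I would first try the standard loss-system exchange argument on the difference \(W_t=N^{\mu}_{\text{rej}}(t)-N^{\text{gr}}_{\text{rej}}(t)\). Show that the inequality \(W_t\ge \text{(greedy occupancy)}-\text{(\(\mu\) occupancy)}\ge 0\) is preserved under arrivals and under coupled departures, where extra greedy parcels leave independently. If that pathwise invariant proves fragile, the fallback is an expectation argument: by flow balance, the long-run average admission rate equals \(p\) times the average occupancy. Average rejections are \(\Lambda_j-p\,\mathbb E[Z]\), and it then suffices to show that greedy maximizes stationary mean occupancy among all admission rules. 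This follows from stochastic monotonicity of the birth–death-type kernel, since admitting more never lowers future occupancy in the monotone coupling.

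\textbf{Conclusion.} Summing the tail bounds with the nonnegative weights \(\pi_j-\pi_{j-1}\) gives \(J^\mu(b)\ge\underline J(b)\). Taking the infimum over \(\mu\), including AA, yields the bound for \(g^*(b)\).
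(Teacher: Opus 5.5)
Your proposal is correct and follows essentially the same route as the paper: the same penalty decomposition into tail rejection sums, the same pooled relaxation of sizes $j,\dots,n$ into $B_j$ identical lockers with Poisson$(\Lambda_j)$ arrivals, and your fallback is exactly the paper's argument (greedy admission dominates occupancy under a monotone coupling, so it maximizes throughput $p\,\mathbb E[Z]$ and hence minimizes the rejection rate $\Lambda_j-p\,\mathbb E[Z]$). Your primary invariant $W_t\ge(\text{greedy occupancy})-(\mu\text{ occupancy})\ge 0$ also goes through---coupled departures only shrink the gap, and greedy can reject an arrival that $\mu$ admits only when greedy is full, which lowers both sides by one---so it gives a slightly sharper sample-path version of the same step, provided the pairing is an injection re-chosen each cycle rather than literal containment of parcel identities (since $\mu$ may hold a parcel that greedy rejected).
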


\begin{proof}
Fix an admissible stationary policy \(\mu\), and let \(R_j^\mu\) denote the
steady-state rejection rate of size \(j\) parcels under this policy. The
rejection cost can be written as \(J^\mu(b)=\sum_{j=1}^n \pi_j R_j^\mu\). Using the decomposition \(\pi_j=\sum_{i=1}^j(\pi_i-\pi_{i-1})\), with \(\pi_0:=0\),
we obtain
\[
J^\mu(b)
=
\sum_{i=1}^n
(\pi_i-\pi_{i-1})
\sum_{j=i}^n R_j^\mu.
\]
Thus, for each \(i\), it is enough to derive a lower bound on the total rejection rate
of sizes \(i,\ldots,n\).

Fix a size index \(i\). Parcels of sizes \(i,\ldots,n\) can occupy only
lockers of sizes \(i,\ldots,n\). Hence, at any time, the total number of
accepted parcels from sizes \(i,\ldots,n\) is at most
\(B_i(b)=\sum_{k=i}^n b_k\).
Now consider an idealized relaxation in which all parcels of sizes
\(i,\ldots,n\) are pooled into a single size with aggregate arrival rate
\(\Lambda_i=\sum_{k=i}^n\lambda_k\)
and have exclusive access to \(B_i(b)\) identical lockers. This relaxation
removes all size-compatibility restrictions inside the tail set and removes any interference from smaller parcels of sizes \(1,\ldots,i-1\).
Therefore, any admission sequence feasible in the original system for
sizes \(i,\ldots,n\) is feasible in this relaxed pooled system.

In the relaxed single-size system, the policy that admits every arriving
parcel whenever a locker is available minimizes the steady-state rejection
rate. Indeed, all parcels are statistically identical, rejection has a unit
cost, and accepted parcels have the same memoryless pickup probability
\(p\). Idling an available locker cannot increase future capacity in a way
that distinguishes future parcels from the current one. Equivalently, under a monotone coupling, the accept-when-available policy
stochastically maximizes the number of occupied lockers in every period,
and therefore maximizes long-run throughput. Because the arrival rate is fixed,
maximizing throughput is equivalent to minimizing the rejection rate.

The minimum rejection rate in this relaxed pooled system is
\(r(\Lambda_i,B_i(b))\). Since the original system is more constrained than
this relaxation, the total rejection rate of sizes \(i,\ldots,n\) in the
original system under policy \(\mu\) must satisfy
\[
\sum_{j=i}^n R_j^\mu
\ge
r(\Lambda_i,B_i(b)).
\]
Substituting this bound into the penalty decomposition gives
\[
J^\mu(b)
=
\sum_{i=1}^n
(\pi_i-\pi_{i-1})
\sum_{j=i}^n R_j^\mu
\ge
\sum_{i=1}^n
(\pi_i-\pi_{i-1})
r(\Lambda_i,B_i(b)).
\]
This proves the bound for every admissible stationary policy \(\mu\).
Applying it to the AA policy gives the fixed-policy bound, and
taking the infimum over \(\mu\) gives \(g^*(b)\ge \underline J(b)\).
\end{proof}

The tail-pooled bound ignores much of the congestion structure. A stronger, configuration-specific relaxation is
obtained by allowing preemption: Parcels already in the
lockers may be evicted at the next replenishment epoch if newly arriving
larger-size parcels need their compatible capacity. An evicted parcel incurs
the same penalty as a parcel rejected upon arrival. This is not a feasible
operating policy in the original model, because admitted parcels cannot be removed
before customer pickup, but it is a valid relaxation of the admission-control policy
problem. The relaxation preserves the stochastic pickup process, while removing only the irreversibility of past admission
decisions.

The corresponding DTMC is defined on the same state space
\(\mathcal S_n\). Given a state \(x\) and arrivals \(a\), let
\(N_i=x_i+a_i\) be the number of size-\(i\) parcels available for retention.
The preemptive relaxation keeps the highest-penalty feasible collection of
parcels. In tail notation, set \(T_{n+1}=0\) and define \(T_i^{P}(x,a)=\min\{B_i,T_{i+1}^{P}(x,a)+N_i\}\), \(i=n,n-1,\ldots,1\), with \(y_i^{P}(x,a)=T_i^{P}(x,a)-T_{i+1}^{P}(x,a)\). The one-period relaxed cost is \(c^{P}(x,a)=\sum_{i=1}^n \pi_i(N_i-y_i^{P}(x,a))\), which counts both rejected arrivals and evicted parcels. Conditional on
\(y^{P}\), pickups are independent binomials as in the original model, and the
next state is \(x'=y^{P}-D\). Let \(\underline J^{P}(b)\) be the stationary
average cost of this finite DTMC.

\begin{lemma}
\label{lem:preemptive-greedy}
For every state \(x\) and arrival vector \(a\), the vector \(y^{P}(x,a)\) defined by the tail recursion above minimizes the relaxed one-period rejection cost over \(\mathcal Y^{P}(x,a)\). Moreover, using this greedy retention policy at every replenishment epoch is optimal for the preemptive relaxation.
\end{lemma}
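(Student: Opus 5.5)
The proof has two parts: a static greedy-optimality claim for one epoch, and a dynamic claim that repeating the greedy choice is optimal for the relaxation.

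\emph{Part 1: one-period optimality.} I take \(\mathcal Y^{P}(x,a)\) to be the set of \(y\in\mathbb Z_+^n\) with \(0\le y_i\le N_i\) and \(\sum_{k\ge i}y_k\le B_i\) for all \(i\). Minimizing \(\sum_i\pi_i(N_i-y_i)\) is then the same as maximizing \(\sum_i\pi_i y_i\). Write \(T_i=\sum_{k\ge i}y_k\) and use the decomposition \(\pi_i=\sum_{j\le i}(\pi_j-\pi_{j-1})\) from the proof of Theorem~\ref{thm:tail-pooled-lower-bound}. This gives
\[
\sum_i \pi_i y_i=\sum_{j=1}^n(\pi_j-\pi_{j-1})\,T_j .
\]
All weights \(\pi_j-\pi_{j-1}\) are positive, so it suffices to show that \(T^{P}\) maximizes every tail sum \(T_j\) simultaneously. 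For any feasible \(y\) we have \(T_j\le B_j\). Also \(T_j=y_j+T_{j+1}\le N_j+T_{j+1}\). By downward induction on \(j\), assuming \(T_{j+1}\le T_{j+1}^{P}\), we get \(T_j\le\min\{B_j,N_j+T_{j+1}^{P}\}=T_j^{P}\). It remains to check that \(y^{P}\) is itself feasible: \(y_i^{P}=T_i^{P}-T_{i+1}^{P}\) lies in \([0,N_i]\) because \(T_{i+1}^{P}\le B_{i+1}\le B_i\), and \(T_i^{P}\le B_i\) holds by construction. So \(y^{P}\) componentwise dominates every feasible vector in all tail sums and is therefore optimal.

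\emph{Part 2: dynamic optimality.} Here the relaxed problem is an MDP with action set \(\mathcal Y^{P}(x,a)\) and the same binomial pickups. I would show that the greedy post-decision state is the best possible under a coupling. The key fact is that if \(y\) is feasible in \(\mathcal Y^{P}\) and \(y'\) is obtained by a pickup, then any parcels held in \(y\) could also be evicted later. Concretely, I compare the greedy policy against an arbitrary policy \(\mu\) along a common sample path of arrivals. Each parcel is assigned its own pickup coin, so held parcels are coupled across the two systems. I would prove by induction over epochs that the greedy system's available pool dominates \(\mu\)'s pool in the tail-sum order. That is, for every \(j\), the greedy system holds at least as many parcels of sizes \(\ge j\), and in fact the greedy retained set is chosen to contain, size by size, the most useful parcels. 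With that dominance, the cumulative penalty of the greedy system is at most that of \(\mu\).

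A cleaner route for Part 2 is an accounting argument. In the relaxation, any parcel not retained is charged exactly \(\pi_i\) whether it is rejected now or evicted later. Holding a parcel therefore never costs more than discarding it now, since it can always be discarded at the next epoch for the same charge. Hence keeping the maximal feasible set by tail sums weakly dominates; this is an exchange argument.

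The main obstacle is making this dominance rigorous under random pickups. A retained parcel in greedy may be picked up, freeing space, while in \(\mu\) a different parcel sits there. I would handle this by coupling pickup coins per locker slot rather than per parcel, which is legitimate because pickups are memoryless and size-independent. With slot coupling, the state after pickups preserves the tail-sum dominance of pools, and Part 1 then applies again at the next epoch. Summing over epochs and taking long-run averages shows that the greedy policy's average cost \(\underline J^{P}(b)\) is no larger than that of any policy, including every admissible policy of the original model, since those are feasible policies of the relaxation.
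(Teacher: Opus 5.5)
\textbf{Verdict.} Your Part~1 is correct, but Part~2 has a genuine gap, and in fact the second sentence of the lemma is false without further assumptions.

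\textbf{Part~1.} This is the paper's argument: tail loads, the telescoped weights $\pi_j-\pi_{j-1}>0$, and downward induction giving $T_j\le T_j^P$. Your explicit check that $y^P\in\mathcal Y^P(x,a)$ is a small improvement on it.

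\textbf{Where Part~2 fails.} Your ``cleaner'' accounting route is essentially the argument the paper gives for this part, and it has the same limitation. ``A kept parcel can be evicted later for the same charge'' only compares $y+e_i$ with $y$, so it shows that keeping an \emph{additional} parcel never hurts. When a tail constraint binds, greedy also decides \emph{which} parcels to keep: it keeps a size-$j$ parcel in place of a size-$i$ one, $i<j$. That exchange $y\mapsto y-e_i+e_j$ is not covered by the accounting argument.

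Your coupling route breaks at the same point. The map $N\mapsto T^P$ is monotone in $N$ componentwise but not in the tail-sum order. With $b=(1,1)$, the pool $N=(0,2)$ dominates $N'=(1,1)$ in tail sums. Yet greedy retains tail loads $(T_1,T_2)=(1,1)$ from $N$, while $(2,1)$ is retainable from $N'$. So tail-sum dominance of pools cannot be carried through your induction, and slot coupling of pickups does not change this. Part~1 only compares retentions from the \emph{same} pool.

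\textbf{A counterexample to the dynamic claim.} Take $b=(1,1)$ and a pool $N=(2,1)$. Greedy keeps $(1,1)$ at cost $\pi_1$; the alternative $(2,0)\in\mathcal Y^P$ costs $\pi_2$. Couple the kept large parcel with the second small one, let the deviating system follow greedy afterwards, and suppose at least one large parcel arrives every cycle while small arrivals are negligible.
\begin{itemize}
\item With probability $p$ the coupled pair leaves and the systems coincide, with greedy ahead by $\pi_2-\pi_1$.
\item With probability $(1-p)^2$ nothing leaves. At the next epoch greedy rejects a large parcel ($\pi_2$) while the other system evicts a small one ($\pi_1$), and they coincide: a net tie.
\item With probability $p(1-p)$ only the common small parcel leaves. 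The other system moves its small parcel down into the size-1 locker and admits the next large parcel, while greedy must reject it at cost $\pi_2$. The leftover small parcel is then harmless.
\end{itemize}
Greedy's expected excess cost is $p(\pi_1-\pi_2)+p(1-p)\pi_1=p[(2-p)\pi_1-\pi_2]$. This is positive whenever $\pi_2<(2-p)\pi_1$; for example $p=1/2$, $\pi=(1,1.2)$ gives $0.15$. I also checked this by hand with a two-point distribution, $A=(0,1)$ w.p.\ $0.9$ and $A=(2,1)$ w.p.\ $0.1$: the margin in favour of $(2,0)$ is still about $0.10$. By continuity the effect survives Poisson arrivals with $\lambda_2$ large and $\lambda_1$ small.

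The reason is that a small parcel in a large locker can later relocate downward and free that locker, whereas a large parcel cannot. Greedy gives this flexibility away. This example needs $\pi_2<(2-p)\pi_1$, so it does not touch the paper's penalty ladders, whose successive ratios are at least $2$. But the lemma is stated for all $0<\pi_1<\pi_2$.

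\textbf{Consequence.} For $b=(1,1)$, the improving action from $x=(2,0)$ with a large arrival is exactly the non-preemptive AA action, and AA coincides with greedy elsewhere. A policy-improvement step then gives $\underline J^{P}(b)>g^{AA}(b)\ge g^*(b)$ in such instances, so Theorem~\ref{thm:preemptive-lower-bound} is affected too. Only your Part~1, the one-period claim, survives. Proving optimality of greedy would need extra structure, such as conditions on penalty ratios, or a different relaxation.
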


\begin{proof}
Fix \(x\) and \(a\), and write \(N_i=x_i+a_i\). Minimizing rejected-penalty cost
is equivalent to maximizing the retained value \(\sum_i \pi_i y_i\). For any
feasible retained vector, define the tail loads \(T_i=\sum_{k=i}^n y_k\), with
 \(T_{n+1}=0\). Then feasibility is equivalent to \(0\le T_i-T_{i+1}\le N_i\) and \(T_i\le B_i\), \(i=1,\ldots,n\). The retained value can be written as \(\sum_{i=1}^n \pi_i y_i=\pi_1T_1+\sum_{i=2}^n(\pi_i-\pi_{i-1})T_i\).
All coefficients in this expression are positive because
\(0<\pi_1<\cdots<\pi_n\). It is therefore sufficient to maximize all tail loads
component-wise. The recursion \(T_i^{P}=\min\{B_i,T_{i+1}^{P}+N_i\}\), \(i=n,\ldots,1\), does exactly this. By backward induction, any feasible vector satisfies
\(T_n\le T_n^P\). If \(T_{i+1}\le T_{i+1}^P\), then
\[
T_i\le \min\{B_i,T_{i+1}+N_i\}
\le
\min\{B_i,T_{i+1}^P+N_i\}=T_i^P.
\]
Thus \(T^P\) dominates every feasible tail-load vector component-wise, and hence
maximizes retained value. The corresponding size counts are
\(y_i^P=T_i^P-T_{i+1}^P\).

Finally, future preemption makes this state-by-state greedy decision dynamically
optimal. Retaining an additional feasible parcel cannot make a future
larger-size parcel infeasible, because the retained parcel can be evicted later
and charged its rejection penalty if such a conflict occurs. Thus, rejecting a
parcel earlier cannot improve future opportunities relative to retaining it and
possibly evicting it later. Hence, applying \(y^P(x, a)\) at every replenishment
epoch is optimal for the relaxed problem.
\end{proof}

\begin{theorem}
\label{thm:preemptive-lower-bound}
For every capacity vector \(b\), the preemptive-relaxation value lower-bounds the optimized rejection cost: \(\underline J^{P}(b)\le g^*(b)\).
\end{theorem}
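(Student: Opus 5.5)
The plan is a relaxation argument in two steps. First, every admissible stationary policy of the original MDP is also feasible in the preemptive relaxation and incurs the same cost there. Second, by Lemma~\ref{lem:preemptive-greedy}, the greedy retention rule \(y^P\) is optimal in the relaxation, and its average cost is \(\underline J^P(b)\).

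To make this precise, I will define the relaxed control problem on the same state space \(\mathcal S_n\). Given \(x\) and \(a\), the decision maker may choose any \(y\in\mathcal Y^P(x,a)=\{y\in\mathbb Z_+^n: 0\le y_i\le x_i+a_i,\ \sum_{k\ge i}y_k\le B_i\}\). The one-period cost is \(\sum_i\pi_i(x_i+a_i-y_i)\), and the next state is \(y-D\) with the same binomial pickups. This set contains \(\mathcal U(x,a)\subseteq\mathcal Y(x,a)\), since the original model simply adds the lower bounds \(y_i\ge x_i\) and the forced-admission constraints. Moreover, on \(\mathcal Y(x,a)\) the relaxed cost coincides with the original rejection cost \(c(x,a,y)\), because no parcel is evicted there.

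Hence any admissible stationary policy \(\mu\) of the original problem induces exactly the same Markov chain on \(\mathcal S_n\) and the same per-period costs when it is viewed as a policy of the relaxed problem. Its average cost is therefore \(J^\mu(b)\) in both models. Taking the infimum gives \(g^{P,*}(b)\le g^*(b)\), where \(g^{P,*}\) denotes the optimal average cost of the relaxed MDP.

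The second step is to identify \(g^{P,*}(b)\) with \(\underline J^P(b)\). Lemma~\ref{lem:preemptive-greedy} states that applying \(y^P\) at every epoch is optimal for the relaxation. The relaxed MDP is finite, and it is unichain by the same argument given for the original model, using \(0<p<1\) and \(\Pr(X_t=0)>0\). Its optimal average cost is therefore well defined and state independent, and it equals the stationary average cost of the greedy DTMC, which is \(\underline J^P(b)\) by definition. Combining the two steps gives \(\underline J^P(b)=g^{P,*}(b)\le g^*(b)\).

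The main obstacle is the dynamic part of Lemma~\ref{lem:preemptive-greedy}, which I take as given. If I wanted a self-contained check, I would first verify that the greedy \(y^P\) component-wise maximizes every tail load \(T_i\). Second, I would show that the post-pickup state from a tail-dominating retention vector can be coupled so that its tail loads dominate in the next period. Third, I would show that the greedy rule's cumulative retained value dominates under that coupling. Because the pickup probability is common to all parcels, thinning preserves tail dominance. The extra parcels can always be evicted later at a cost no greater than what rejecting them now would have cost, so a cycle-by-cycle induction should close the argument.
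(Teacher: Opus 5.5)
\paragraph{Verdict.} Your argument is the paper's proof almost verbatim, and your first step is sound. The second step is not: it rests on a lemma that is false, and the statement itself can fail.

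\paragraph{The first step holds.} Since $\mathcal U(x,a)\subseteq\mathcal Y^{P}(x,a)$, with the same cost and the same pickup law, the optimal value $g^{P,*}(b)$ of the relaxed MDP satisfies $g^{P,*}(b)\le g^*(b)$.

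\paragraph{The second step fails.} You need $\underline J^{P}(b)=g^{P,*}(b)$, i.e.\ the dynamic half of Lemma~\ref{lem:preemptive-greedy}. You take it as given, as the paper does, but it is not true in general, and your verification sketch breaks at its inductive step. Thinning plus common arrivals does preserve tail dominance of the \emph{pools}. The capped greedy retention does not inherit that dominance. With $b=(1,1)$, the pool $N=(0,2)$ tail-dominates $N'=(1,1)$, yet the best retention from $N$ is $(0,1)$, while $N'$ keeps $(1,1)$. The underlying reason is that a size-1 parcel is more flexible than a size-2 parcel, since it can later relocate to a small locker. Retaining the larger parcel is therefore not a dominating choice. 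Your remark that extra parcels can be evicted later addresses how many parcels are kept, not which sizes.

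\paragraph{A concrete counterexample.} Take $b=(1,1)$, $\pi=(1,1+\varepsilon)$, state $x=(2,0)$, and a size-2 arrival.
\begin{itemize}
\item Greedy returns $y^{P}=(1,1)$: it evicts a small parcel and gains $\varepsilon$ now. The alternative keeps $(2,0)$.
\item Give the retained large parcel (greedy) and the second small parcel (alternative) a common pickup. The $(2,0)$ system can copy every later decision of the $(1,1)$ system with its small parcel in place of the large one. It pays $\pi_1\le\pi_2$ if that parcel is discarded, so it is never worse.
\item Now suppose the small-locker parcel is picked up, the paired parcel survives, and the next cycle brings at least one size-2 parcel and no size-1 parcel. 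The $(1,1)$ system faces exactly the pools above and must discard one more size-2 parcel. The $(2,0)$ system relocates its small parcel and keeps it, paying at most $\pi_1$ later, and only if that parcel is not picked up first.
\item This gives an expected future advantage of at least $p^2(1-p)e^{-\lambda_1}(1-e^{-\lambda_2})\pi_1$, which does not depend on $\varepsilon$.
\end{itemize}
So for small $\varepsilon$, the greedy action violates the relaxed optimality equation at a recurrent state, and greedy is not optimal for the relaxation.

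\paragraph{The statement itself can fail.} Take $p=1/2$ and $\lambda=(1,1)$. Greedy and AA differ only at state $(2,0)$ when a size-2 parcel arrives, and their per-cycle loss counts coincide there. At $\pi_1=\pi_2$, my solution of the two five-state chains gives a greedy loss rate about $9\times10^{-4}$ per cycle above AA's. Both costs are linear in $\pi$ for fixed policies. Hence for $\pi_2$ slightly above $\pi_1$, $\underline J^{P}(b)>g^{AA}(b)\ge g^*(b)$.

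\paragraph{What survives.} Your argument (and the paper's) actually proves only $g^{P,*}(b)\le g^*(b)$. To get a bound computable from a fixed-policy DTMC, you would need either to solve the relaxed MDP itself, or to add a condition on the gaps $\pi_{i+1}-\pi_i$ under which greedy retention is provably optimal.
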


\begin{proof}
Consider the relaxed action set
\[
\mathcal Y^{P}(x,a)=
\left\{
y\in\mathbb Z_+^n:
0\le y_i\le x_i+a_i,\quad
\sum_{k=i}^n y_k\le B_i,\quad i=1,\ldots,n
\right\}.
\]
This set contains the original action set \(\mathcal U(x, a)\): it drops both
the constraints that already-present parcels must remain, \(y_i\ge x_i\), and
the lower bounds requiring arrivals to be admitted into available associated size
lockers. Hence, every original admission decision is feasible in the preemptive
relaxation, with the same immediate cost and pickup transition. The
optimal average cost of the relaxed MDP is therefore no larger than \(g^*(b)\).

By Lemma~\ref{lem:preemptive-greedy}, the tail-recursion policy \(y^P(x, a)\) is
optimal for the preemptive relaxation. The finite DTMC described above is the
Markov chain induced by this optimal relaxed policy, and its stationary average
cost is \(\underline J^{P}(b)\). Therefore \(\underline J^{P}(b)\) is the
optimal value of the preemptive relaxation and is no larger than \(g^*(b)\).
\end{proof}

Computing the lower bound in Theorem~\ref{thm:tail-pooled-lower-bound} is several orders of magnitude faster than solving the MDP presented in Section~\ref{sec:nsize-model}, or even computing the expected cost under AA, because it decomposes the system into \(n\) one-dimensional DTMCs, each with only \(B_i+1\) states. The preemptive bound in Theorem~\ref{thm:preemptive-lower-bound} is typically more expensive because its DTMC uses the full state space, but it preserves the actual congestion structure and may therefore be tighter. The screening order is therefore sequential: first apply the inexpensive tail-pooled bound, then compute the preemptive-relaxation bound only for configurations that survive the first screen, and solve the exact MDP only for configurations that survive both. The remaining missing piece is a good heuristic for finding the first incumbent solution, and the following local search is effective for this purpose. The search is initialized at
\(b_i^{0}=\lceil \lambda_i/p\rceil\), \(i=1,\dots,n\), which is the nominal number of size-\(i\) lockers needed to match the mean number of size-\(i\) parcels in the system under a no-spillover approximation. Let \(e_i\) denote the \(i\)th unit vector. From any current capacity vector \(b\), the heuristic evaluates the add/drop and exchange neighborhood
\[
\mathcal N(b)=\{b+e_i,\; b-e_i:\ i=1,\dots,n\}
\cup
\{b+e_i-e_j:\ i,j=1,\dots,n,\ i\ne j\},
\]
and moves to the neighbor with the largest decrease in \(C^*(b)\). Each exact evaluation usually solves the MDP for the current capacity vector. The only exception is when the sufficient condition of Theorem~\ref{thm:nsize-always-accept} certifies that AA is optimal for that vector; in that case, the fixed-policy AA DTMC gives the same value \(g^*(b)\). The procedure stops when no neighbor improves the objective.

The two evaluation models can be combined sequentially. One can first run the local search with the fixed-policy DTMC under AA, which is much cheaper than solving the full MDP because no Bellman optimization is required. The resulting local optimum is then computed by value iteration and used as the starting point for a second local search in which each neighbor is evaluated under the optimized admission policy. This two-stage variant uses the DTMC solution as a warm start for the more expensive MDP search. When the second stage switches to value iteration, however, the DTMC objective values computed in the first stage cannot be reused as exact MDP evaluations; only capacity vectors evaluated under the active value iteration objective may be cached for the subsequent bound-and-enumerate step. The two-stage procedure is therefore an incumbent-generation heuristic until followed by the bound-and-enumerate certification, but it is attractive when the AA surface is a close approximation to the optimized surface, as suggested by the numerical evidence in Sections~\ref{sec:numerical-evidence} and~\ref{sec:locker-assortment}.

Algorithm~\ref{alg:assortment-optimization} combines this incumbent-search step with the simplex restriction and the two relaxation lower bounds above. We refer to it as a bound-and-enumerate algorithm: it first obtains an incumbent via local search, then enumerates the incumbent simplex, discarding configurations that cannot improve it using lower bounds. The cheap tail-pooled bound is evaluated first, and the stronger preemptive bound is evaluated only for candidates that remain potentially improving. It is therefore an exact optimization method for the full MDP locker-assortment objective \(C^*(b)\). The sufficient condition of Theorem~\ref{thm:nsize-always-accept} is used only as an evaluation shortcut: when it certifies a candidate, the AA DTMC computes the exact MDP value for that candidate; otherwise, value iteration is used. The candidate configurations are sorted in increasing order of
\(B_1(b)=\sum_{i=1}^n b_i\), the total number of lockers, with facility cost as a secondary key. This tends
to evaluate smaller exact Markov chains first while avoiding rigid layer-by-layer
batches. Whenever the incumbent improves, the upper bound \(U\) decreases, and
the queue is filtered to remove configurations that have left the
incumbent simplex.

\begin{algorithm}[htbp]
\caption{Bound-and-enumerate locker-assortment optimization}
\label{alg:assortment-optimization}
\scriptsize
\begin{algorithmic}[1]
\Require Arrival rates \(\lambda\), pickup probability \(p\), penalties \(\pi\), locker costs \(f\), initial capacity vector \(b^0\)
\Ensure Optimal capacity vector \(b^\star\) and objective value \(U\) for \(C^*(b)\)
\State Run exchange-neighborhood local search from \(b^0\), using exact evaluations of \(C^*(b)\)
\State Let \(b^\star\) be the local-search solution and set \(U\gets C^*(b^\star)\)
\State Let \(\mathcal E\) be the set of capacity vectors already evaluated exactly during local search
\State Build \(\mathcal Q\gets\{b\in\mathbb Z_+^n:\sum_{i=1}^n f_i b_i<U,\ b\notin\mathcal E\}\), sorted by \(\left(\sum_i b_i,\sum_i f_i b_i\right)\)
\For{each unevaluated \(b\in\mathcal Q\), in sorted order}
    \If{\(\sum_{i=1}^n f_i b_i<U\)} \Comment{May fail after an incumbent update}
        \State \(L_0(b)\gets \underline J(b)+\sum_{i=1}^n f_i b_i\) \Comment{Cheap tail-pooled screen}
        \If{\(L_0(b)<U\)}
            \State \(L_P(b)\gets \underline J^{P}(b)+\sum_{i=1}^n f_i b_i\) \Comment{Stronger preemptive screen}
            \If{\(L_P(b)<U\)}
                \If{Theorem~\ref{thm:nsize-always-accept} certifies AA for \(b\)}
                    \State Evaluate \(g^*(b)=g^{AA}(b)\) by the fixed-policy DTMC
                \Else
                    \State Evaluate \(g^*(b)\) by solving the MDP with value iteration
                \EndIf
                \State Set \(C^*(b)\gets g^*(b)+\sum_{i=1}^n f_i b_i\)
                \If{\(C^*(b)<U\)}
                    \State \(b^\star\gets b\), \(U\gets C^*(b)\)
                    \State Remove from \(\mathcal Q\) every remaining \(b'\) with \(\sum_i f_i b'_i\ge U\)
                \EndIf
            \EndIf
        \EndIf
    \EndIf
\EndFor
\State \Return \(b^\star\), \(U\)
\end{algorithmic}
\normalsize
\end{algorithm}

The implementation parallelizes the computationally expensive exact-evaluation
steps without changing the logic of Algorithm~\ref{alg:assortment-optimization}.
During the local-search phase, all feasible neighbors of the current capacity
vector are evaluated concurrently, and the algorithm moves only after all neighbor evaluations have returned, preserving the best-improvement policy. During the bound-and-enumerate phase, the sorted queue provides a common pool of candidate configurations. The screening stages can be parallelized. The
tail-pooled bound is usually inexpensive, but the preemptive-relaxation bound in
Theorem~\ref{thm:preemptive-lower-bound} may require solving a relatively large
DTMC and is therefore a target for concurrent evaluation. In the
parallel implementation, batches of candidates are first screened by the
tail-pooled bound; candidates that survive are then dispatched to independent
workers for preemptive-bound evaluation; candidates that survive both screens are
dispatched to the exact evaluator. Whenever a worker returns a better incumbent,
the queue is filtered against the new value of \(U\). The parallelization
therefore affects only wall-clock time. The incumbent, the upper bound, and the
final certificate are computed from the same lower bounds and exact evaluations
as in the serial version. In our implementation the Python driver uses
concurrent workers to launch independent C++ evaluator calls, so the parallelism
is at the level of candidate designs rather than inside the Markov chain solvers
themselves.

Upon termination, \(b^\star\) is globally optimal for the full MDP locker-assortment objective \(C^*(b)\). Optimality follows from the incumbent simplex and the valid lower bounds in Theorems~\ref{thm:tail-pooled-lower-bound} and~\ref{thm:preemptive-lower-bound}. Any design not included in the initial queue has facility cost at least the initial incumbent value and therefore cannot improve it. Any design removed after an incumbent update has facility cost at least the new incumbent value and cannot improve it. Every remaining candidate in the incumbent simplex has either been pruned by a valid lower bound or evaluated exactly. The evaluation is exact whether it is obtained by value iteration or by the AA DTMC after Theorem~\ref{thm:nsize-always-accept} has certified that AA is optimal for that particular capacity vector.

The optimization experiments are conducted on relatively small instances, where discreteness effects are strongest and local-search failure should be easiest to detect. In such systems, adding or removing one locker can substantially change the state space and the blocking probabilities.
As the system scales, the marginal effect of a single locker becomes more
incremental, and the locker-assortment surface is expected to become smoother.
Thus, although the small-instance results do not prove that local search is globally optimal for larger instances, they provide numerical evidence that
exchange-neighborhood local search performs reliably in the tested regimes.  In high-demand
instances, the full bound-and-enumerate method no longer provides a formal optimality certificate. In those cases,
the local-search component should be viewed as a scalable heuristic. The
combination of small-instance exact optimization, the observed smoothness of
the assortment surface, and the diminishing relative effect of individual
locker changes makes local search a practical approach. In all tests
reported below, local search already finds the global optimum, so the
enumeration phase verifies the incumbent but does not improve it.

\subsection{Numerical Validation and Large-Scale Heuristics}\label{sec:assortment-numerics}
For each instance, the objective is
\(C^*(b)=g^*(b)+\sum_{i=1}^n f_i b_i\), where \(g^*(b)\) is the optimized steady-state rejection cost and \(f_i>0\) is the unit facility cost of a size-\(i\) locker. The numerical experiment applies Algorithm~\ref{alg:assortment-optimization} to a demand grid based on the load regimes in Table~\ref{tab:nsize-experiment-design}, but the assortment vector is now the decision variable. To define the grid without reference to a candidate assortment, each scale group uses the corresponding base capacity vector from Table~\ref{tab:nsize-experiment-design}. For a normalized load profile \(\rho\), the offered-load vector is \(L_i=\rho_i b_i^{\mathrm{base}}\), and the Poisson arrival rate is \(\lambda_i=p L_i\). The supplement lists the resulting numerical rates for reproducibility.

Table~\ref{tab:assortment-search-experiment-design} summarizes the experimental design. For each generated instance, we run local search from \(b_i^0=\lceil\lambda_i/p\rceil\) using the add/drop plus exchange neighborhood. Exact objective evaluations solve the full MDP by value iteration unless Theorem~\ref{thm:nsize-always-accept} certifies AA for the tested capacity vector, in which case the fixed-policy DTMC gives the same value at lower computational cost. The enumeration phase uses valid relaxation lower bounds to prune configurations before exact evaluation. The optional AA DTMC search is used only to warm-start the value iteration local search; it does not enter the certificate unless the theorem certifies exactness for the specific vector being evaluated.
\begin{table}[htbp]
\centering
\caption{Locker-assortment experiment design}
\label{tab:assortment-search-experiment-design}
\scriptsize
\begin{tabular}{@{}>{\raggedright\arraybackslash}p{0.28\textwidth}>{\raggedright\arraybackslash}p{0.31\textwidth}>{\raggedright\arraybackslash}p{0.31\textwidth}@{}}
\toprule
& $n=3$ & $n=4$  \\
\midrule
Penalty vectors & \((1,2,4)\), \((1,2,8)\), \((1,4,16)\) & \((1,2,4,8)\), \((1,2,4,16)\), \((1,3,9,27)\) \\
Facility-cost vector \(f\) & \((0.15,0.35,0.80)\) & \((0.15,0.35,0.80,1.80)\) \\
Search initialization & \multicolumn{2}{c}{\(b_i^0=\lceil\lambda_i/p\rceil\)} \\
Reported neighborhood & \multicolumn{2}{c}{Add/drop plus exchange moves} \\
Evaluation model & \multicolumn{2}{c}{Full MDP by value iteration, with certified AA DTMC shortcut} \\
Optimization policy & \multicolumn{2}{c}{Algorithm~\ref{alg:assortment-optimization} with relaxation lower-bound pruning} \\
Total generated instances & \(135\) & \(135\) \\
\bottomrule
\end{tabular}
\normalsize
\end{table}

For each instance, we record the final capacity vector, objective value, whether local search found the global optimum, the number of exact MDP evaluations, the number of lower-bound prunings, and wall-clock time split between local search and certification. Table~\ref{tab:assortment-certification-results} reports the exact certification results for the small, medium, and large groups. Across all \(270\) instances, the local-search incumbent is globally optimal. The reported mean AA gap is the gap between the best AA-DTMC assortment found in the first heuristic phase and the certified optimum; Table~\ref{tab:assortment-simulation-heuristic} later reports the corresponding gaps for assortments selected by the simulation heuristic. Thus, in these tests, the enumeration phase establishes optimality but never improves the local-search solution.

\begin{table}[htbp]
\centering
\caption{Exact locker-assortment certification results}
\label{tab:assortment-certification-results}
\tiny
\setlength{\tabcolsep}{3pt}
\resizebox{\textwidth}{!}{%
\begin{tabular}{@{}lrrrrrrrrrrr@{}}
\toprule
Scale &
\begin{tabular}[c]{@{}c@{}}Local search\\optimal\end{tabular} &
\begin{tabular}[c]{@{}c@{}}AA\\optimal\end{tabular} &
\begin{tabular}[c]{@{}c@{}}Mean\\AA gap\end{tabular} &
\begin{tabular}[c]{@{}c@{}}Mean\\candidates\end{tabular} &
\begin{tabular}[c]{@{}c@{}}Pruned by\\bounds\end{tabular} &
\multicolumn{2}{c}{Exact evals.} &
\multicolumn{2}{c}{Search time} &
\multicolumn{2}{c}{Cert. time} \\
\cmidrule(lr){7-8}\cmidrule(lr){9-10}\cmidrule(l){11-12}
& & & & & &
\multicolumn{1}{c}{mean} &
\multicolumn{1}{c}{max} &
\multicolumn{1}{c}{mean} &
\multicolumn{1}{c}{max} &
\multicolumn{1}{c}{mean} &
\multicolumn{1}{c}{max} \\
\midrule
Small & 90/90 & 72/90 & 0.037\% & 6{,}673 & 99.64\% & 23.8 & 259 & 16.9 s & 5.0 m & 16.0 s & 4.6 m \\
Medium & 90/90 & 77/90 & 0.030\% & 10{,}208 & 99.75\% & 25.2 & 283 & 59.6 s & 17.0 m & 50.3 s & 13.5 m \\
Large & 90/90 & 80/90 & 0.006\% & 19{,}999 & 99.88\% & 25.0 & 415 & 8.5 m & 1.9 h & 6.6 m & 2.4 h \\
\bottomrule
\end{tabular}
}
\normalsize
\end{table}

Table~\ref{tab:assortment-certification-results} highlights two computational patterns. First, the exchange-neighborhood local search finds the global optimum in all certified instances; the first incumbent found by local search is already optimal. Second, the relaxation bounds are essential for certification. The incumbent simplex contains thousands to tens of thousands of candidate assortments, but the tail-pooled and preemptive lower bounds prune more than \(99.6\%\) of them in every scale group. Consequently, the algorithm solves only about \(24\) exact MDPs per instance on average during certification. The large group is computationally heavier because exact value iteration calls become more expensive, but the method still certifies all \(90\) large instances; the most difficult one required about \(1.9\) hours to solve and about \(2.4\) hours to certify.

For larger systems, exact value iteration and bound-and-enumerate certification may become computationally unavailable. 
In that regime, the local-search component can still be used as a scalable heuristic, although global optimality is no longer certified. 
One practical shortcut is to replace exact evaluation of \(g^*(b)\) by discrete-event simulation of the AA operating policy. 
This simulation oracle estimates \(g^{AA}(b)\), which is close to \(g^*(b)\) in the larger service-point instances tested above, and can be much faster than value iteration or exact DTMC evaluation under AA. It can therefore be used for scalable assortment search when an optimality certificate is not required.
A parallel simulation of the preemptive relaxation in Theorem~\ref{thm:preemptive-lower-bound} can provide a lower-bound estimate against which simulation-optimized assortments can be compared when exact DTMC or value-iteration evaluation is unavailable.

Table~\ref{tab:assortment-simulation-heuristic} summarizes the simulation-only local-search runs. The first three scale groups can be compared with the exact value iteration certificates in Table~\ref{tab:assortment-certification-results}. The simulation heuristic returned the optimal capacity vector obtained by value iteration in \(79\), \(85\), and \(87\) of the \(90\) small, medium, and large instances, respectively. For these three groups, we also re-evaluated each simulation-selected assortment under the AA DTMC and compared \(C^{AA}(\hat b^{\mathrm{sim}})\) with the certified full-MDP optimum \(C^*(b^*)\). The mean exact-AA gaps are \(0.037\%\), \(0.031\%\), and \(0.006\%\), and the largest gaps are \(0.746\%\), \(0.646\%\), and \(0.230\%\), respectively. These gaps are deterministic diagnostics rather than simulation-noise artifacts; if AA is not optimal at a selected assortment, they can overstate the true full-MDP assortment gap. For the larger groups, no exact certificate is available, but the computation remains fast: even in the 4XL group, where the largest returned assortment has \(709\) lockers, the median run time is \(76.5\) seconds and the maximum is \(3.4\) minutes. 

\begin{table}[htbp]
\centering
\caption{Simulation-only local search for larger locker-assortment instances}
\label{tab:assortment-simulation-heuristic}
\scriptsize
\resizebox{\textwidth}{!}{%
\begin{tabular}{@{}lrrrrrrrrr@{}}
\toprule
Scale & Instances &
\begin{tabular}[c]{@{}c@{}}Mean total\\capacity\end{tabular} &
\begin{tabular}[c]{@{}c@{}}Max total\\capacity\end{tabular} &
\begin{tabular}[c]{@{}c@{}}Mean\\evaluations\end{tabular} &
\begin{tabular}[c]{@{}c@{}}Mean\\steps\end{tabular} &
\begin{tabular}[c]{@{}c@{}}Median\\time\end{tabular} &
\begin{tabular}[c]{@{}c@{}}Match with\\ optimum\end{tabular} &
\begin{tabular}[c]{@{}c@{}}Mean \\AA gap\end{tabular} &
\begin{tabular}[c]{@{}c@{}}Max \\AA gap\end{tabular} \\
\midrule
Small & 90 & 17.8 & 25 & 41.4 & 1.4 & 1.8 s & 79 / 90 & 0.037\% & 0.746\% \\
Medium & 90 & 23.2 & 32 & 44.3 & 1.8 & 2.2 s & 85 / 90 & 0.031\% & 0.646\% \\
Large & 90 & 32.2 & 48 & 47.7 & 2.2 & 2.9 s & 87 / 90 & 0.006\% & 0.230\% \\
XL & 90 & 64.5 & 92 & 61.0 & 3.9 & 5.7 s & -- & -- & -- \\
XXL & 90 & 129.2 & 182 & 82.4 & 6.2 & 13.2 s & -- & -- & -- \\
3XL & 90 & 258.8 & 358 & 109.5 & 9.5 & 31.1 s & -- & -- & -- \\
4XL & 90 & 517.8 & 709 & 146.4 & 13.3 & 76.5 s & -- & -- & -- \\
\bottomrule
\end{tabular}
}
\normalsize
\end{table}

The evidence is consistent with the intuition that, as the system grows large, the AA policy is more likely to be optimal or very close to optimal, and that the $g^*(b)$ surface behaves well and lends itself to local search. Therefore, for a large-scale system with high demand, simulation-based assortment search can yield sound solutions quickly.

\section{Conclusion}\label{sec:conclusion}

This paper studies admission control and locker assortment for multi-size automated parcel lockers where smaller parcels can use larger lockers, but doing so may block future larger parcels with higher rejection penalties. We formulate this trade-off as a finite-state average-cost MDP, solve validation instances by value iteration, and identify a conservative sufficient condition under which the AA policy is optimal.
Across broad two-, three-, and four-size experiments, AA is optimal or near optimal in all tested cases. The AA policy is not always optimal when parcel holding times are long, but even then, the observed gaps between AA and the optimal policy are negligible. This makes AA a practical baseline for service-point operation and allows the system to be evaluated using a fixed-policy DTMC or estimated via simulation rather than a full MDP.
The same framework supports the locker-assortment problem. Although the optimized rejection cost surface is not globally discrete convex, the bound-and-enumerate method certifies optimal assortments for all validation instances for which exact certification is computationally tractable, and exchange-neighborhood local search finds the certified optimum in every such instance. For larger systems, simulation-based local search provides a scalable heuristic that remains consistent with the exact evidence on smaller systems.
The model assumes that parcels can be relocated from larger to smaller lockers without operational cost; in practice, this additional courier task should be measured and incorporated into both the admission and assortment models. The memoryless pickup assumption is also a simplification: if pickup probabilities depend on parcel age, a fully optimal policy could use the seniority profile of parcels in the station. Developing such age-aware policies would require a richer state description and is a natural direction for future work, especially with approximate dynamic programming or reinforcement-learning methods for larger systems.

\clearpage
\appendix
\begin{center}{\Large\textbf{Supplementary Material}}\end{center}
\medskip

\section{Robustness to the Pickup-Time Assumption}\label{sec:pickup-robustness}
The main model assumes that each parcel is picked up independently in each cycle with probability $p$. This keeps the state Markovian in the parcel-count vector. Actual locker systems usually impose a finite pickup window, so we test whether the rejection-cost estimates are sensitive to the memoryless assumption.

The empirical evidence supports using short, right-skewed holding-time distributions. In a residential-building locker study in Seattle, \citet{RanjbariEtAl2023RightSize} report a median time-to-pickup of $4.2$ hours and a mean of $12.5$ hours; $84\%$ of packages were collected within $24$ hours, and $99\%$ within one week. In an Australia Post dataset covering 51 lockers and more than 867,000 pickup records, \citet{LeungLachapelleBurke2023} find that parcel dwell times stabilized around $15$--$16$ hours, with longer weekend holding times. In the Amazon Locker setting, \citet{SethuramanEtAl2024} estimate dwell-time probabilities over $0,1,\dots,6$ days by shipping option and delivery day.
The pickup probabilities used in the main experiments are therefore conservative stress regimes: $p=0.5$ gives a mean holding time of two replenishment cycles, while $p=0.33$ and $p=0.25$ represent slower pickup. The memoryless assumption is still an approximation. To assess its effect, we compare it with finite discrete holding-time distributions that have the same mean but concentrate pickup mass in the first few days.

Let $F=(\eta_1,\dots,\eta_M)$ be a finite pickup-time distribution, where $\eta_t$ is the probability of pickup after $t$ cycles, and let \(\bar T=\sum_{t=1}^M t\eta_t\). We compare two AA systems with the same capacities, arrival rates, and penalties. The first uses $F$ and is evaluated by long-run simulation. The second uses the matched memoryless model with \(p=1/\bar T\) and is evaluated exactly by the AA Markov chain. The comparison statistic is \((g^{FD}-g^M)/g^M\), where $g^{FD}$ is the simulated average rejection cost under $F$ and $g^M$ is the exact memoryless value.

The experiment uses the three- and four-size validation regimes of the main paper, with the capacity grid broadened to four capacity vectors for each size count (the main-paper capacity vectors augmented by one additional vector per size count), combined with five normalized-load profiles, three penalty ladders, and four pickup-time distributions, giving $480$ instances. Each finite-distribution instance is simulated for $1{,}000{,}000$ cycles, with $9{,}900$ measured blocks of 100 cycles after warm-up. Table~\ref{tab:pickup-robustness} reports signed and absolute relative differences between the finite-distribution simulation and the exact memoryless estimate.

\begin{table}[htbp]
\centering
\caption{Robustness to the memoryless pickup assumption}
\label{tab:pickup-robustness}
\small
\begin{tabular}{lcccccc}
\toprule
Pickup distribution & $\bar T$ & Inst. & Mean diff. & Mean abs.\ diff. & Mean rel.\ half-CI & Max abs.\ diff. \\
\midrule
Overall & -- & 480 & $-0.098\%$ & $0.168\%$ & $0.317\%$ & $0.900\%$ \\
$(0.7,0.2,0.1)$ & 1.40 & 120 & $-0.001\%$ & $0.124\%$ & $0.288\%$ & $0.497\%$ \\
$(0.5,0.3,0.2)$ & 1.70 & 120 & $-0.102\%$ & $0.148\%$ & $0.305\%$ & $0.587\%$ \\
$(0.3,0.3,0.4)$ & 2.10 & 120 & $-0.227\%$ & $0.256\%$ & $0.325\%$ & $0.900\%$ \\
$(0.35,0.25,0.2,0.1,0.1)$ & 2.35 & 120 & $-0.061\%$ & $0.145\%$ & $0.348\%$ & $0.671\%$ \\
\bottomrule
\end{tabular}
\normalsize
\end{table}

The memoryless model slightly overestimates rejection costs on average: the mean signed relative difference is $-0.098\%$, and the finite-distribution cost is lower in $320$ of the $480$ instances. The delayed three-point distribution $(0.3,0.3,0.4)$ is the most challenging case, with the largest signed and absolute relative deviations. Even there, the maximum absolute relative difference is below $1\%$.

Thus, across the tested three- and four-size instances, replacing a finite pickup-time distribution by a geometric pickup process with the same mean changes the AA rejection cost by less than $1\%$ in every case, with median absolute error about $0.13\%$. This does not prove that AA remains optimal among all age-aware policies: once pickups are not memoryless, parcel age becomes part of the true state. The conclusion is limited to operating-cost stability under the non-memoryless pickup distributions tested here. Similar robustness results for the single-service-point case are reported by \citet{raviv2023service}.

\section{Reproducibility and Computational Tools}\label{sec:reproducibility}
The numerical results were produced with Python and C++. Python handles experiment orchestration, data processing, workbook generation, plotting, and parallel scheduling of independent design evaluations in the bound-and-enumerate assortment algorithm. C++ is used for the computational kernels: relative value iteration, exact Markov-chain evaluation under fixed policies, and simulation of finite pickup-time distributions.

The replication scripts used Python~3.11.8. The C++ kernels were compiled from the Python wrappers with \texttt{-O3}; the newer general-purpose kernels use \texttt{-std=c++17}. The larger runs were executed on Linux workstations with an AMD Ryzen~9~5950X CPU and the system \texttt{g++} compiler. The same code was also tested on macOS with \texttt{clang}. The C++ solvers are single-threaded; all parallelism comes from the Python driver, which launches independent evaluator calls for different candidate designs.

The scripts, source files, and generated experiment outputs are maintained under Git version control so that the reported tables and figures can be traced to the corresponding runs. The repository is available at \url{https://github.com/tal69/sp-multi-size}.

\section{Locker-Assortment Experiment Demand Grid}\label{app:assortment-search-grid}

Table~\ref{tab:assortment-search-arrivals} reports the arrival rates used in the locker-assortment experiment. For each scale group, the offered-load vector is \(\lambda/p\); multiplying by the pickup probability \(p\) gives the Poisson arrival-rate vector \(\lambda\). The last three columns give the arrival-rate vectors for the three pickup regimes used in the experiment.

\begin{table}[htbp]
\centering
\caption{Arrival-rate and offered-load grid for the locker-assortment experiment}
\label{tab:assortment-search-arrivals}
\scriptsize
\resizebox{\textwidth}{!}{%
\begin{tabular}{@{}cccccc@{}}
\toprule
Sizes & Scale & Offered load \(\lambda/p\) & \(\lambda\) for \(p=0.25\) & \(\lambda\) for \(p=0.33\) & \(\lambda\) for \(p=0.50\) \\
\midrule
3 & Small & \((9.60,3.20,1.60)\) & \((2.40,0.80,0.40)\) & \((3.17,1.06,0.53)\) & \((4.80,1.60,0.80)\) \\
3 & Small & \((12.00,4.00,2.00)\) & \((3.00,1.00,0.50)\) & \((3.96,1.32,0.66)\) & \((6.00,2.00,1.00)\) \\
3 & Small & \((14.40,4.80,2.40)\) & \((3.60,1.20,0.60)\) & \((4.75,1.58,0.79)\) & \((7.20,2.40,1.20)\) \\
3 & Small & \((14.40,4.00,1.60)\) & \((3.60,1.00,0.40)\) & \((4.75,1.32,0.53)\) & \((7.20,2.00,0.80)\) \\
3 & Small & \((9.60,4.00,2.40)\) & \((2.40,1.00,0.60)\) & \((3.17,1.32,0.79)\) & \((4.80,2.00,1.20)\) \\
\addlinespace
3 & Medium & \((12.80,4.00,2.40)\) & \((3.20,1.00,0.60)\) & \((4.22,1.32,0.79)\) & \((6.40,2.00,1.20)\) \\
3 & Medium & \((16.00,5.00,3.00)\) & \((4.00,1.25,0.75)\) & \((5.28,1.65,0.99)\) & \((8.00,2.50,1.50)\) \\
3 & Medium & \((19.20,6.00,3.60)\) & \((4.80,1.50,0.90)\) & \((6.34,1.98,1.19)\) & \((9.60,3.00,1.80)\) \\
3 & Medium & \((19.20,5.00,2.40)\) & \((4.80,1.25,0.60)\) & \((6.34,1.65,0.79)\) & \((9.60,2.50,1.20)\) \\
3 & Medium & \((12.80,5.00,3.60)\) & \((3.20,1.25,0.90)\) & \((4.22,1.65,1.19)\) & \((6.40,2.50,1.80)\) \\
\addlinespace
3 & Large & \((19.20,6.40,3.20)\) & \((4.80,1.60,0.80)\) & \((6.34,2.11,1.06)\) & \((9.60,3.20,1.60)\) \\
3 & Large & \((24.00,8.00,4.00)\) & \((6.00,2.00,1.00)\) & \((7.92,2.64,1.32)\) & \((12.00,4.00,2.00)\) \\
3 & Large & \((28.80,9.60,4.80)\) & \((7.20,2.40,1.20)\) & \((9.50,3.17,1.58)\) & \((14.40,4.80,2.40)\) \\
3 & Large & \((28.80,8.00,3.20)\) & \((7.20,2.00,0.80)\) & \((9.50,2.64,1.06)\) & \((14.40,4.00,1.60)\) \\
3 & Large & \((19.20,8.00,4.80)\) & \((4.80,2.00,1.20)\) & \((6.34,2.64,1.58)\) & \((9.60,4.00,2.40)\) \\
\midrule
4 & Small & \((9.60,2.40,1.60,0.80)\) & \((2.40,0.60,0.40,0.20)\) & \((3.17,0.79,0.53,0.26)\) & \((4.80,1.20,0.80,0.40)\) \\
4 & Small & \((12.00,3.00,2.00,1.00)\) & \((3.00,0.75,0.50,0.25)\) & \((3.96,0.99,0.66,0.33)\) & \((6.00,1.50,1.00,0.50)\) \\
4 & Small & \((14.40,3.60,2.40,1.20)\) & \((3.60,0.90,0.60,0.30)\) & \((4.75,1.19,0.79,0.40)\) & \((7.20,1.80,1.20,0.60)\) \\
4 & Small & \((14.40,3.15,1.80,0.75)\) & \((3.60,0.79,0.45,0.19)\) & \((4.75,1.04,0.59,0.25)\) & \((7.20,1.57,0.90,0.38)\) \\
4 & Small & \((9.00,2.70,2.10,1.20)\) & \((2.25,0.68,0.53,0.30)\) & \((2.97,0.89,0.69,0.40)\) & \((4.50,1.35,1.05,0.60)\) \\
\addlinespace
4 & Medium & \((12.80,3.20,1.60,0.80)\) & \((3.20,0.80,0.40,0.20)\) & \((4.22,1.06,0.53,0.26)\) & \((6.40,1.60,0.80,0.40)\) \\
4 & Medium & \((16.00,4.00,2.00,1.00)\) & \((4.00,1.00,0.50,0.25)\) & \((5.28,1.32,0.66,0.33)\) & \((8.00,2.00,1.00,0.50)\) \\
4 & Medium & \((19.20,4.80,2.40,1.20)\) & \((4.80,1.20,0.60,0.30)\) & \((6.34,1.58,0.79,0.40)\) & \((9.60,2.40,1.20,0.60)\) \\
4 & Medium & \((19.20,4.20,1.80,0.75)\) & \((4.80,1.05,0.45,0.19)\) & \((6.34,1.39,0.59,0.25)\) & \((9.60,2.10,0.90,0.38)\) \\
4 & Medium & \((12.00,3.60,2.10,1.20)\) & \((3.00,0.90,0.53,0.30)\) & \((3.96,1.19,0.69,0.40)\) & \((6.00,1.80,1.05,0.60)\) \\
\addlinespace
4 & Large & \((16.00,4.00,2.40,0.80)\) & \((4.00,1.00,0.60,0.20)\) & \((5.28,1.32,0.79,0.26)\) & \((8.00,2.00,1.20,0.40)\) \\
4 & Large & \((20.00,5.00,3.00,1.00)\) & \((5.00,1.25,0.75,0.25)\) & \((6.60,1.65,0.99,0.33)\) & \((10.00,2.50,1.50,0.50)\) \\
4 & Large & \((24.00,6.00,3.60,1.20)\) & \((6.00,1.50,0.90,0.30)\) & \((7.92,1.98,1.19,0.40)\) & \((12.00,3.00,1.80,0.60)\) \\
4 & Large & \((24.00,5.25,2.70,0.75)\) & \((6.00,1.31,0.68,0.19)\) & \((7.92,1.73,0.89,0.25)\) & \((12.00,2.62,1.35,0.38)\) \\
4 & Large & \((15.00,4.50,3.15,1.20)\) & \((3.75,1.12,0.79,0.30)\) & \((4.95,1.49,1.04,0.40)\) & \((7.50,2.25,1.57,0.60)\) \\
\bottomrule
\end{tabular}
}
\normalsize
\end{table}

\clearpage

\section{Notation}\label{app:notation}

\begin{longtable}{@{}>{\raggedright\arraybackslash}p{0.22\textwidth}>{\raggedright\arraybackslash}p{0.72\textwidth}@{}}
\caption{Notation}\label{tab:supp-notation}\\
\toprule
Symbol & Meaning \\
\midrule
\endfirsthead
\toprule
Symbol & Meaning \\
\midrule
\endhead
\bottomrule
\endfoot
\multicolumn{2}{@{}l}{\textit{General multi-size model}} \\
\(n\) & Number of parcel and locker sizes, ordered from smallest \(1\) to largest \(n\). \\
\(i\) & Size index. A size-\(i\) parcel can use locker sizes \(i,i+1,\dots,n\). \\
\(b_i\) & Number of lockers of size \(i\). \\
\(B_i=\sum_{k=i}^n b_k\) & Tail capacity available to parcel sizes \(i,\dots,n\). \\
\(C_i=\sum_{k=1}^i b_k\) & Total capacity of locker sizes \(1,\dots,i\). \\
\(H_i=\sum_{k=i+1}^n b_k\) & Number of larger-size lockers above interface \(i\). \\
\(\ell_i(x)\) & Number of size-\(<i\) parcels occupying size-\(i\) lockers after relocation. \\
\(\phi_i(x)\) & Number of size-\(i\) lockers available for size-\(i\) arrivals after relocation of existing parcels. \\
\(\pi_i\) & Rejection penalty for a size-\(i\) parcel, with \(\pi_1<\cdots<\pi_n\). \\
\(p\) & Per-cycle pickup probability in the memoryless pickup model. \\
\(X_t^i\) & Number of size-\(i\) arrivals in cycle \(t\). \\
\(X_t=(X_t^1,\dots,X_t^n)\) & Arrival vector in cycle \(t\). \\
\(A\) & Generic arrival vector used in the Bellman equation. \\
\(\lambda_i\), \(\lambda=(\lambda_1,\dots,\lambda_n)\) & Mean size-\(i\) Poisson arrival rate, and the corresponding arrival-rate vector, in the numerical experiments. \\
\(x_i\) & Number of size-\(i\) parcels in the system before replenishment. \\
\(x=(x_1,\dots,x_n)\) & Pre-replenishment state vector. \\
\(\mathcal S_n\) & Feasible state space for the \(n\)-size system. \\
\(a=(a_1,\dots,a_n)\) & Realized arrival vector used in the \(n\)-size Bellman equation. \\
\(y_i\) & Number of size-\(i\) parcels retained after replenishment. \\
\(y=(y_1,\dots,y_n)\) & Post-replenishment parcel-count vector. \\
\(\mathcal Y(x,a)\) & Physically feasible set of post-replenishment vectors in state \(x\) under arrivals \(a\). \\
\(\mathcal U(x,a)\) & Admissible MDP action set, equal to the vectors in \(\mathcal Y(x,a)\) that respect the decreasing-size processing policy. \\
\(D_i\) & Number of size-\(i\) pickups after replenishment. Conditional on \(y_i\), \(D_i\sim\mathrm{Bin}(y_i,p)\). \\
\(g\) & Optimal average daily rejection cost in the average-cost MDP. \\
\(h(x)\) & Relative value, or bias, function in the average-cost optimality equation. \\
\(v^{(k)}\) & Value-iteration approximation to the bias function at iteration \(k\). \\
\(Y_i=\sum_{k=i}^n y_k\) & Post-replenishment tail load from size \(i\) upward. \\
\(\rho_i=\lambda_i/(p b_i)\) & No-spillover normalized load of size \(i\), used in the numerical experiments. \\\\
\multicolumn{2}{@{}l}{\textit{Multi-size sufficient condition}} \\
\(m\) & Number of smaller-size parcels occupying larger-size lockers after replenishment. \\
\(D_{i,m}\) & Pickup count among \(C_i+m\) smaller-size parcels when \(m\) such parcels occupy larger-size lockers across interface \(i\). \\
\(R_{i,m}\) & Number of smaller-size parcels still occupying larger-size lockers after the next relocation step, starting from \(m\) smaller-size parcels above interface \(i\). \\
\(q_{i,m}\) & One-cycle persistence probability for \(m\) smaller-size parcels crossing interface \(i\). \\
\(q_i^{\max}\) & Worst-case persistence probability at interface \(i\), used in the sufficient condition for AA optimality. \\\\
\multicolumn{2}{@{}l}{\textit{Policy evaluation, robustness, and design}} \\
\(g^{AA}\) & Average rejection cost under the AA policy. \\
\(g^*\) & Optimal average rejection cost from value iteration. \\
\(F=(\eta_1,\dots,\eta_M)\) & Finite pickup-time distribution used in the robustness experiment. \\
\(\bar T=\sum_{t=1}^M t\eta_t\) & Mean holding time under a finite pickup-time distribution. \\
\(g^{FD}\) & Simulated average rejection cost under a finite pickup-time distribution. \\
\(g^M\) & Exact average rejection cost under the matched memoryless pickup model. \\
\(f_i\) & Facility cost per locker of size \(i\) in the locker-assortment section. \\
\(b^0\) & Initial capacity vector used to start local search in the locker-assortment algorithm. \\
\(C^*(b)\) & Locker-assortment objective: optimized rejection cost plus facility cost. \\
\end{longtable}

\bibliographystyle{plainnat}
\bibliography{ref}
\end{document}